
\documentclass{amsart}
\usepackage{setspace}

\usepackage{amsthm}
\usepackage{a4}
\usepackage{amsfonts}
\usepackage{amsmath}
\usepackage{amssymb}
\usepackage{mathrsfs}
\usepackage{mathtools}

\usepackage{enumerate}

%
%

\usepackage{color}

\setlength{\marginparsep}{5pt} \setlength{\marginparwidth}{70pt}
\parskip\medskipamount
\parindent0pt

\DeclareFontFamily{OT1}{manual}{}
\DeclareFontShape{OT1}{manual}{m}{n}{ <10> manfnt }{}

\makeatletter
\def\blfootnote{\xdef\@thefnmark{}\@footnotetext}
\makeatother

\newcommand*{\<}{\langle}
\renewcommand*{\>}{\rangle}
\newcommand*{\x}{\times}








\newcommand{\leqs}{\leqslant}
\newcommand{\geqs}{\geqslant}

\newtheorem{lemma}{Lemma}[section]
\newtheorem{theorem}[lemma]{Theorem}
\newtheorem{prop}[lemma]{Proposition}
\newtheorem{cor}[lemma]{Corollary}

\newtheorem{question}[lemma]{Question}

\theoremstyle{definition}

\theoremstyle{remark}
\newtheorem{remark}[lemma]{Remark}

\theoremstyle{definition}
\newtheorem{example}[lemma]{Example}

\title{Group and round quadratic forms}
\author{James O'Shea}
\address{James O'Shea,\newline Department of Mathematics and Statistics,\newline National University of Ireland Maynooth.\newline E-mail: james.oshea@nuim.ie}
\date{}
\begin{document}

\maketitle

%


\begin{abstract}\noindent We offer some elementary characterisations of group and round quadratic forms. These characterisations are applied to establish new (and recover existing) characterisations of Pfister forms. We establish ``going-up'' results for group and anisotropic round forms with respect to iterated Laurent series fields, which contrast with the established results with respect to rational function field extensions. For forms of two-power dimension, we determine when there exists a field extension over which the form becomes an anisotropic group form that is not round.
\end{abstract}


\blfootnote{James O'Shea,\\ Department of Mathematics and Statistics, National University of Ireland Maynooth, Ireland.\\ \emph{Previous affiliation}: National University of Ireland Galway.\\ \emph{E-mail}: james.oshea@nuim.ie}





\section{Introduction} 






A quadratic form is round if its value set coincides with the multiplicative group of similarity factors associated with the form. Thus, round forms constitute a prominent subclass of group forms, forms whose value sets are multiplicative groups. As roundness is one of the fundamental properties of Pfister forms, the class of forms that occupy a central role in quadratic form theory, it is unsurprising that this notion has had a number of important consequences. However, while the structure and behaviour of round forms has received extensive treatment in the literature, this class of forms is still not fully understood and, as suggested in \cite{LAM}, merits further study. The broader class of group forms is comparatively little understood.


Our opening results, which are invoked throughout this article, record elementary characterisations of the classes of group and round forms (see Proposition~\ref{group1} to Corollary~\ref{roundchar}). In Section 2, we apply these results to obtain new characterisations of Pfister forms (see Theorem~\ref{transgroup}), in addition to re-proving established ones (see Corollary~\ref{Pfisterreproof}), and to extend Elman's classification of odd-dimensional round forms in accordance with our broader definition of roundness.

The group and round properties of a form are intrinsically linked to its base field of definition, and thus are sensitive to scalar extension. In \cite{A}, Alpers remarks that while general ``going-down'' results exist with respect to roundness, with round forms over odd-degree extensions being round over their base fields for example, no general results are known in the ``going-up'' direction. We establish such results for group and anisotropic round forms with respect to iterated Laurent series fields (see Corollary~\ref{laurentcor}), highlighting an interesting divergence in the behaviour of forms under extension to iterated Laurent series fields as opposed to rational function fields (see Remark~\ref{divergence}). 



In \cite{HJ1} and \cite{HJ2}, Hsia and Johnson studied the problem of distinguishing between anisotropic group and round forms over local and global fields. In this spirit, we consider the following general question:



\begin{question}\label{groupnotround} For $q$ an anisotropic form over $F$, does there exist an extension $K/F$ such that $q_K$ is an anisotropic group form that is not round?
\end{question}

Our characterisation of group forms allows for the construction of a generic field extension over which a form becomes an anisotropic group form. Thus, the adoption of Merkurjev's method of passing to iterated field extensions obtained by composing function fields of quadratic forms represents the natural approach to addressing the above question. While highlighting an obstruction to resolving Question~\ref{groupnotround} in general (see Proposition~\ref{isogroup}), we can employ this method to good effect in certain situations. In particular, Theorem~\ref{gnr} represents a complete answer to Question~\ref{groupnotround} with respect to forms of two-power dimension.

We let $F$ denote a field of characteristic different from two, and recall that every non-degenerate quadratic form on a vector space over $F$ can be diagonalised. We write $\<a_1,\ldots,a_n\>$ to denote the ($n$-dimensional) quadratic form on an $n$-dimensional $F$-vector space defined by $a_1,\ldots ,a_n\in F^{\x}$. We use the term ``form'' to refer to a non-degenerate quadratic form of positive dimension. If $p$ and $q$ are forms over $F$, we denote by $p\perp q$ their orthogonal sum and by $p\otimes q$ their tensor product. We use $aq$ to denote $\<a\>\otimes q$ for $a\in F^{\x}$. We write $p\simeq q$ to indicate that $p$ and $q$ are isometric, and say that $p$ and $q$ are \emph{similar} if $p\simeq aq$ for some $a\in F^{\x}$. A form $p$ is a \emph{subform of $q$} if $q\simeq p\perp r$ for some form $r$, in which case we write $p\subseteq q$. For $q$ a form over $F$ and $K/F$ a field extension, we will often employ the notation $q_K$ when viewing $q$ as a form over $K$ via the canonical embedding. A form $q$ represents $a\in F$ if there exists a vector $v$ such that $q(v)=a$. We denote by $D_F(q)$ the set of values in $F^{\x}$ represented by $q$. A form over $F$ is \emph{isotropic} if it represents zero non-trivially, and \emph{anisotropic} otherwise. Every form $q$ has a decomposition $q\simeq q_{\mathrm{an}}\perp i(q)\x\<1,-1\>$ where the anisotropic form $q_{\mathrm{an}}$ and the non-negative integer $i(q)$ are uniquely determined. If a form $q$ is isotropic over $F$, then $D_F(q)=F^{\x}$, as $\left(\frac {a+1} 2\right)^2-\left(\frac {a-1} 2\right)^2=a$ for all $a\in F^{\x}$. A form $q$ is \emph{hyperbolic} if $i(q)=\frac 1 2 \dim q$. 


%






A form $q$ is a \emph{group form} over $F$ if $D_F(q)$ is a subgroup of $F^{\x}$. The similarity factors of $q$ generate the group $G_F(q)=\{a\in F^{\x}\mid aq\simeq q\}$. Equivalently, $G_F(q)=\{a\in F^{\times}\mid \< 1,-a\>\otimes q\text{ is hyperbolic}\}$. A group form $q$ over $F$ is said to be \emph{round} if $D_F(q)=G_F(q)$. Equivalently, a form $q$ is round over $F$ if $D_F(q)\subseteq G_F(q)$, as if $a\in D_F(q)\subseteq G_F(q)$, then $aq\simeq q$, whereby $1\in D_F(q)$ and thus $G_F(q)\subseteq D_F(q)$. We use $H_F(q)$ to denote the set of products of two elements of $D_F(q)$. Per Lemma~\ref{R}, we have that $H_F(q)=\{a\in F^{\times}\mid \< 1,-a\>\otimes q\text{ is isotropic}\}.$ For $n\in\mathbb{N}$, an \emph{$n$-fold Pfister form} over $F$ is a form isometric to $\<1,a_1\>\otimes\ldots\otimes\<1,a_n\>$ for some $a_1,\ldots ,a_n\in F^{\times}$ (the form $\< 1\>$ is the $0$-fold Pfister form). Isotropic Pfister forms are hyperbolic \cite[Theorem X.1.7]{LAM}. Pfister forms are round (see \cite[Theorem X.1.8]{LAM}). A form $\tau$ is a Pfister neighbour if $\tau$ is similar to a subform of a Pfister form $\pi$ and $\dim{\tau}>\frac 1 2 \dim\pi$.

We recall that every non-zero square class in $F(\!(x)\!)$, the Laurent series field in the variable $x$ over $F$, can be represented by $a$ or $ax$ for some $a\in F^{\x}$, whereby every form over $F(\!(x)\!)$ can be written as $p\perp xq$ for $p$ and $q$ forms over $F$. We will often invoke the following folkloric result regarding the isotropy over Laurent series fields.

\begin{lemma}\label{Hlemma} Let $p$ and $q$ be forms over $F$. Considering $p\perp x q$ as a form over $F(\!(x)\!)$, we have that $i(p\perp x q)=i(p)+ i(q)$.
\end{lemma}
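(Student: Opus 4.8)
The plan is to prove that the anisotropic parts add up, i.e. $i(p \perp xq) = i(p) + i(q)$, by reducing to the case where both $p$ and $q$ are anisotropic. First I would split off the hyperbolic parts: write $p \simeq p_{\mathrm{an}} \perp i(p) \times \<1,-1\>$ and $q \simeq q_{\mathrm{an}} \perp i(q) \times \<1,-1\>$. Over $F(\!(x)\!)$, the forms $\<1,-1\>$ and $x\<1,-1\>$ are both hyperbolic, so $p \perp xq$ is Witt-equivalent to $p_{\mathrm{an}} \perp x q_{\mathrm{an}}$ and carries $i(p) + i(q)$ extra hyperbolic planes. Hence it suffices to show that $p_{\mathrm{an}} \perp x q_{\mathrm{an}}$ is anisotropic over $F(\!(x)\!)$, which would give $i(p \perp xq) = i(p) + i(q)$ exactly.

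The core of the argument is therefore the claim that if $p$ and $q$ are anisotropic over $F$, then $p \perp xq$ is anisotropic over $F(\!(x)\!)$. The plan is to argue by the standard valuation-theoretic technique for Laurent series fields. Suppose for contradiction that there is a nontrivial isotropic vector, i.e. vectors $u, v$ with entries in $F(\!(x)\!)$, not both zero, satisfying $p(u) + x\,q(v) = 0$. I would clear denominators and powers of $x$: after scaling by an appropriate power of $x$, I may assume all entries lie in $F[\![x]\!]$ and that not every entry is divisible by $x$, so that reduction modulo $x$ is well-defined and not identically zero on $(u,v)$.

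The key step is then to compare $x$-adic valuations. Writing each entry as a power series in $x$ and reducing modulo $x$, the relation $p(u) + x\,q(v) = 0$ forces $p(\bar u) \equiv 0 \pmod{x}$, where $\bar u$ denotes the reduction of $u$; since $p$ has coefficients in $F$, this means $p(\bar u) = 0$ in $F$. By anisotropy of $p$, this gives $\bar u = 0$, i.e. every entry of $u$ is divisible by $x$. Writing $u = x u'$ and substituting, the relation becomes $x^2 p(u') + x\, q(v) = 0$, so $x\,p(u') + q(v) = 0$; reducing modulo $x$ now yields $q(\bar v) = 0$, whence $\bar v = 0$ by anisotropy of $q$, so every entry of $v$ is also divisible by $x$. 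This contradicts the normalisation that not every entry of $(u,v)$ was divisible by $x$, establishing anisotropy.

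The main obstacle I anticipate is purely bookkeeping: setting up the reduction carefully so that the minimal valuation among the entries of $u$ and $v$ is tracked correctly, since the factor of $x$ in front of $q$ shifts the two valuation contributions by one. One must choose the normalisation (which block to clear, and by what power of $x$) so that the modulo-$x$ reductions are genuinely nonzero and the alternating parity argument goes through cleanly; this is where an inattentive treatment could produce a spurious cancellation. Everything else—the Witt-decomposition reduction and the use of anisotropy—is routine once this valuation comparison is organised correctly.
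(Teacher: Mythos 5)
Your proof is correct and complete. The paper gives no proof of this lemma, stating it as folklore, and your argument is precisely the standard folkloric one: split off the hyperbolic planes via Witt decomposition (noting that $x\<1,-1\>$ is hyperbolic over $F(\!(x)\!)$), then prove that $p_{\mathrm{an}}\perp x\,q_{\mathrm{an}}$ is anisotropic by the residue argument over the complete discretely valued field $F(\!(x)\!)$ --- essentially Springer's theorem for complete discretely valued fields (see \cite{LAM}, Chapter VI, \S 1). The valuation bookkeeping you flagged is handled correctly as written: normalising $(u,v)$ so that some entry is an $x$-adic unit, and then alternating reductions modulo $x$ (using anisotropy of $p$ over the residue field $F$, dividing by $x$, then using anisotropy of $q$), forces every entry of $(u,v)$ to be divisible by $x$, the required contradiction.
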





For a form $q$ over $F$ with $\dim q=n\geqs 2$ and $q\not\simeq\<1,-1\>$, the \emph{function field $F(q)$ of $q$} is the quotient field of the integral domain $F[X_1,\ldots ,X_n]/(q(X_1,\ldots ,X_n))$ (this is the function field of the affine quadric $q(X)=0$ over $F$). To avoid case distinctions, we set $F(q)=F$ if $\dim q= 1$ or $q\simeq\<1,-1\>$. For $q$ a form over $F$, we note the inclusion $F(\!(x)\!)(q)\subseteq F(q)(\!(x)\!)$, which we will apply in combination with Lemma~\ref{Hlemma}. For all forms $p$ over $F$ and all extensions $K/F$ such that $q_K$ is isotropic, we have that $i(p_{F(q)})\leqs i(p_K)$ in accordance with Knebusch's specialisation results \cite[Proposition 3.1 and Theorem 3.3]{Kn1}. In particular, letting $i_1(q)$ denote $i(q_{F(q)})$, we have that $i_1(q)\leqs i(q_K)$ for all extensions $K/F$ such that $q_K$ is isotropic. Invoking the Cassels-Pfister Subform Theorem \cite[Theorem X.$4.5$]{LAM} of Wadsworth \cite[Theorem 2]{W} and Knebusch \cite[Lemma 4.5]{Kn1}, for $p$ and $q$ anisotropic forms over $F$ of dimension at least two such that $p_{F(q)}$ is hyperbolic, one has that $aq\subseteq bp$ for all $a\in D_F(q)$ and $b\in D_F(p)$. For $q$ an anisotropic form over $F$ of dimension at least two, it is known that $q_{F(q)}$ is hyperbolic if and only if $q$ is similar to a Pfister form over $F$ by \cite[Theorem X.$4.14$]{LAM}, a result of Wadsworth \cite[Theorem 5]{W} and Knebusch \cite[Theorem 5.8]{Kn1}. We will regularly invoke \cite[Theorem 1]{H}, Hoffmann's Separation Theorem, which we recall below.



\begin{theorem}\label{H95} Let $p$ and $q$ be forms over $F$ such that $p$ is anisotropic. If $\dim p\leqs 2^n<\dim q$ for some integer $n\geqs 0$, then $p_{F(q)}$ is anisotropic.\end{theorem}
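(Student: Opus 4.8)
The plan is to argue by contraposition and induction on $n$, arranging matters so that the Laurent series machinery recalled above (Lemma~\ref{Hlemma} together with the inclusion $F(\!(x)\!)(q)\subseteq F(q)(\!(x)\!)$) carries the dimension bookkeeping. The base case $n=0$ is immediate: there $\dim p\leqs 1$, so $p\simeq\<a\>$ for some $a\in F^{\x}$, and a one-dimensional form stays anisotropic over every extension of $F$. For the inductive step I assume the statement for $n-1$ and suppose, for contradiction, that $p$ is anisotropic with $\dim p\leqs 2^n$ while $p_{F(q)}$ is isotropic.

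First I would reduce to the case $\dim q=2^n+1$. Choosing a subform $q'\subseteq q$ with $\dim q'=2^n+1$, the containment $q'\subseteq q$ forces $q_{F(q')}$ to be isotropic, so Knebusch's specialisation results give $i(p_{F(q)})\leqs i(p_{F(q')})$; hence it suffices to derive a contradiction from the isotropy of $p_{F(q')}$, and I may replace $q$ by $q'$. With $\dim q=2^n+1$ fixed, observe further that if $\dim p\leqs 2^{n-1}$ then the induction hypothesis for $n-1$ (legitimate since $\dim q=2^n+1>2^{n-1}$) already shows $p_{F(q)}$ anisotropic, contradicting our assumption; so I may also assume $2^{n-1}<\dim p\leqs 2^n$.

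The mechanism I would then try, to exploit the dimension gap, runs through the Laurent variable. Writing $p\simeq p_1\perp p_2$ with $\dim p_1,\dim p_2\leqs 2^{n-1}$, the induction hypothesis shows that $(p_1)_{F(q)}$ and $(p_2)_{F(q)}$ are each anisotropic, as both dimensions are at most $2^{n-1}<\dim q$. Passing to $E=F(\!(x)\!)$ and forming the twisted sum $p_1\perp x\,p_2$ over $E$, Lemma~\ref{Hlemma} applied over $F(q)$ computes $i\big((p_1\perp x\,p_2)_{F(q)(\!(x)\!)}\big)=i((p_1)_{F(q)})+i((p_2)_{F(q)})=0$, and since $E(q)=F(\!(x)\!)(q)\subseteq F(q)(\!(x)\!)$, the form $p_1\perp x\,p_2$ remains anisotropic over $E(q)$. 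This establishes the Separation Theorem for the \emph{twisted} sum over $E$, with the same $n$.

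I expect the decisive obstacle to be the passage from this anisotropy of the twisted sum over $E(q)$ to the desired anisotropy of the \emph{untwisted} sum $p=p_1\perp p_2$ over $F(q)$, and this is exactly where the genuine content of the theorem resides. The naive specialisation $x\mapsto 1$ does not transport anisotropy, so additional input is unavoidable: the two halves being anisotropic over $F(q)$ does not by itself prevent their orthogonal sum from splitting there. Bridging this gap requires controlling the first Witt index $i_1(q)$ along the generic splitting tower of $q$ and upgrading the hypothesised isotropy of $p_{F(q)}$ to hyperbolicity of an associated form, so that the Cassels--Pfister Subform Theorem can be brought to bear; it is precisely this bookkeeping—forcing the dimension $\dim q=2^n+1$ to obstruct any specialisation crossing the power of two that $p$ cannot cross—that I anticipate to be the crux, and for which the additivity in Lemma~\ref{Hlemma} and the function-field inclusion were prepared.
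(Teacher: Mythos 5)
This statement is not proved in the paper at all: it is Hoffmann's Separation Theorem, recalled verbatim and cited as \cite[Theorem 1]{H}, so your attempt has to be measured against Hoffmann's actual argument --- and it falls well short, as you yourself concede in your final paragraph. Your preparatory reductions are fine: the base case $n=0$ is trivial; passing to a subform $q'\subseteq q$ with $\dim q'=2^n+1$ via Knebusch specialisation is legitimate (with the small unaddressed point that if $q$ is isotropic then $F(q)/F$ is purely transcendental, or $F(q)=F$, and there is nothing to prove); and the induction hypothesis does let you assume $2^{n-1}<\dim p\leqslant 2^n$ and gives anisotropy of $(p_1)_{F(q)}$ and $(p_2)_{F(q)}$ for any splitting $p\simeq p_1\perp p_2$ into halves of dimension at most $2^{n-1}$.

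The genuine gap is that your Laurent-series step is vacuous rather than a partial result. The anisotropy of $p_1\perp x\,p_2$ over $E(q)=F(\!(x)\!)(q)$ follows \emph{formally} from the induction hypothesis: by Lemma~\ref{Hlemma} and the inclusion $E(q)\subseteq F(q)(\!(x)\!)$, the Witt index of the twisted sum over $F(q)(\!(x)\!)$ is exactly $i((p_1)_{F(q)})+i((p_2)_{F(q)})=0$, so the twisted sum encodes precisely the separate anisotropy of the two halves and carries no information about how $p_1$ and $p_2$ interact inside $p$ over $F(q)$ --- which is the entire content of the theorem. No mechanism is offered (and none along these lines exists; specialising $x\mapsto 1$ does not preserve anisotropy, as you note) to descend from the twisted sum to $p_1\perp p_2$, and the closing paragraph replaces the missing argument with a wish list ($i_1(q)$ along the splitting tower, upgrading isotropy to hyperbolicity, Cassels--Pfister). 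Hoffmann's proof needs genuinely different input at exactly this point: after the reduction to $\dim q=2^n+1$, one pads $p$ by generic scalars over a purely transcendental extension so that $\dim p=2^n$ exactly, and then compares $p$ and $q$ through anisotropic $(n+1)$-fold Pfister forms and their function fields, with the Cassels--Pfister Subform Theorem supplying the dimension obstruction. None of that is carried out here, so what you have is a correct framing of the problem, not a proof.
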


In accordance with the above theorem and \cite[Exercise I.$16$]{LAM}, for $q$ an anisotropic form over $F$, we note that $\dim q$ and $\dim q-i_1(q)$ belong to an interval of the form $[2^n,2^{n+1}]$ for some $n\in\mathbb{N}\cup\{ 0\}$. We will also invoke the following isotropy criterion of Karpenko and Merkurjev \cite[Theorem 4.1]{KM}.




\begin{theorem}\label{km} For $p$ and $q$ anisotropic forms over $F$ such that $p_{F(q)}$ is isotropic, 
\begin{enumerate}[$(i)$]\item $\dim p - i_1(p)\geqs\dim q - i_1(q)$;
\smallskip
\item $\dim p - i_1(p)=\dim q - i_1(q)$ if and only if $q_{F(p)}$ is isotropic.\end{enumerate}\end{theorem}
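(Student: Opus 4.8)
The plan is to set aside the elementary Witt-theoretic toolkit and argue geometrically, since the sharp invariant $\dim q-i_1(q)$ is not visible from the splitting behaviour of $q$ alone but rather from the algebraic cycles on the associated quadric. I do not expect a proof using only the results recalled above; those will serve merely to supply coarse numerical bounds that a finer computation must refine.

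First I would set up the standard dictionary between function-field isotropy and rational maps of quadrics. Write $X_p$ and $X_q$ for the projective quadrics $\{p=0\}$ and $\{q=0\}$, of dimensions $\dim p-2$ and $\dim q-2$. A zero of $p$ over $F(q)$ is the same as an $F(q)$-point of $X_p$, hence the same as a rational map from $X_q$ to $X_p$ defined over $F$; taking the closure of its graph yields a correspondence $\rho$ from $X_q$ to $X_p$ whose first projection is birational onto $X_q$. The first Witt indices enter through base change: $i_1(q)$ governs precisely which low-dimensional cycles on $X_q$ become rational once the generic point is adjoined, so that $\dim q-i_1(q)$ is the threshold dimension this controls, and likewise for $\dim p-i_1(p)$. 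The goal of this stage is to translate both assertions into statements about the correspondence $\rho$ and the dimensions of the cycles it can transfer.

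For part $(i)$ the heart is a dimension estimate on algebraic cycles. After base change to $\bar F$ I would decompose $\rho$ in the Chow group $\mathrm{CH}^*(X_q\times X_p)$ with $\Z/2$-coefficients and apply the Steenrod operations, in the manner of Vishik and of Karpenko and Merkurjev. The rationality of cycles on each quadric is constrained by its first Witt index, with $\dim q-i_1(q)$ and $\dim p-i_1(p)$ serving as the respective threshold dimensions; the correspondence $\rho$, being defined over $F$ and dominating $X_q$, is forced to carry the threshold cycle of $X_q$ to a rational cycle on $X_p$, and tracking dimensions through this transfer yields $\dim p-i_1(p)\geqs\dim q-i_1(q)$. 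Here Theorem~\ref{H95}, together with the observation that $\dim q$ and $\dim q-i_1(q)$ lie in a common interval $[2^n,2^{n+1}]$, supplies the coarse constraints that the cycle-theoretic computation sharpens.

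For part $(ii)$ the forward implication is immediate from part $(i)$: if $q_{F(p)}$ is also isotropic, then applying $(i)$ with the roles of $p$ and $q$ exchanged gives $\dim q-i_1(q)\geqs\dim p-i_1(p)$, forcing equality. The content lies in the converse, where I would argue that equality in the dimension count makes $\rho$ rigid: the minimal image compatible with equality must itself support a correspondence in the opposite direction, giving a rational map from $X_p$ to $X_q$ and hence the isotropy of $q_{F(p)}$. The main obstacle throughout is exactly this cycle-theoretic core — establishing the sharp inequality and the reversibility in the equality case — which lies well beyond the elementary results of the excerpt and requires the full apparatus of Chow groups of quadrics, Rost's nilpotence theorem and the mod-two Steenrod operations; the delicate point is to control $\mathrm{CH}^*(X_q\times X_p)$ finely enough to pin the transferred cycle to the correct shell.
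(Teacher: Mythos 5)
You should first note a structural point: the paper itself contains no proof of Theorem~\ref{km}. It is imported verbatim from Karpenko and Merkurjev \cite[Theorem 4.1]{KM}, so there is no internal argument to compare yours against, and your opening judgement --- that the statement cannot be derived from the elementary Witt-theoretic material recalled in the excerpt --- is correct. Moreover, your roadmap does track the strategy of the actual Karpenko--Merkurjev proof: one passes from isotropy of $p_{F(q)}$ to a rational map $X_q\dashrightarrow X_p$ of projective quadrics, takes the closure of its graph as a correspondence, and works in mod-$2$ Chow groups with Steenrod operations, with $\dim q - i_1(q)$ playing the role of the threshold (in their language, essentially the essential dimension of $X_q$) that governs which cycles become rational.

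As a proof, however, the proposal has a genuine gap: every load-bearing step is asserted rather than argued. The claims that $i_1(q)$ ``governs precisely'' the rationality of cycles in the threshold range, and that the correspondence $\rho$ ``is forced to carry the threshold cycle of $X_q$ to a rational cycle on $X_p$,'' are not lemmas one can wave at --- they are the main theorems of \cite{KM}, established by a delicate Steenrod-operation computation on $\mathrm{CH}^*$ of products of quadrics, and your sketch gives no indication of how that computation would go. Nor can Theorem~\ref{H95} and the interval observation ``supply coarse constraints'' to be sharpened: the logical flow runs the other way, since the Karpenko--Merkurjev machinery independently reproves Hoffmann's Separation Theorem. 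A second, more pointed defect concerns part $(ii)$: your forward implication (equality from $(i)$ applied in both directions) is fine, but the converse is mislocated. A correspondence from $X_p$ back to $X_q$ does not by itself yield isotropy of $q_{F(p)}$; quadrics without rational points admit plenty of incoming correspondences. The actual mechanism is a parity argument: equality of $\dim p - i_1(p)$ and $\dim q - i_1(q)$ forces a reverse correspondence of \emph{odd multiplicity}, hence a closed point of odd degree on $X_q$ over $F(p)$, and only then does Springer's theorem \cite{Sp} on odd-degree extensions convert this into isotropy of $q_{F(p)}$. The ``rigidity'' you invoke is a reasonable heuristic for this, but without the odd-multiplicity bookkeeping and the appeal to Springer the step fails. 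In short: your proposal is an accurate map of the terrain of the original proof, but not a proof.
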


We refer to works by Vishik \cite{Vishik} and Scully \cite{S1} for recent results in the spirit of Theorem~\ref{H95} and Theorem~\ref{km}.









\section{Characterisations of group, round and Pfister forms}

As above, the forms we consider are non-degenerate and of positive dimension over fields of characteristic different from two. In accordance with the associated definitions, we begin our study of the properties of a form $q$ over $F$ being group or round by considering the value set $D_F(q)$, the group of similarity factors $G_F(q)$, and the set of products of two elements of $D_F(q)$, usually denoted by $D_F(q)D_F(q)$.

The following result appears in Roussey's thesis (see \cite[Lemme 2.5.4]{R}).

\begin{lemma}\label{R} For $p$ and $q$ forms over $F$, we have that $$D_F(p)D_F(q)=\{a\in F^{\x}\mid p\perp -a q \text{ is isotropic}\}.$$
\end{lemma}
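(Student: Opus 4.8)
The plan is to prove the two set inclusions separately, exploiting the elementary fact that $D_F(q)$ is closed under taking inverses. Indeed, if $c\in D_F(q)$, say $q(w)=c$, then $q((1/c)w)=c^{-2}c=1/c$, so $1/c\in D_F(q)$. Consequently the product set $D_F(p)D_F(q)$ coincides with the set of quotients $\{b/c\mid b\in D_F(p),\ c\in D_F(q)\}$. This quotient reformulation is precisely what connects products of represented values to the isotropy of $p\perp -aq$, since a nontrivial zero of $p\perp -aq$ amounts to an equation $p(v)=a\,q(w)$.

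For the inclusion $D_F(p)D_F(q)\subseteq\{a\in F^{\times}\mid p\perp -aq\text{ isotropic}\}$, I would write $a=bc$ with $b\in D_F(p)$ and $c\in D_F(q)$, choose vectors $v,w$ with $p(v)=b$ and $q(w)=1/c$ (using closure under inverses), and observe that $(p\perp -aq)(v,w)=b-a(1/c)=b-b=0$ with $v\neq 0$. Hence $p\perp -aq$ is isotropic, giving the membership.

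For the reverse inclusion, I would suppose $p\perp -aq$ is isotropic, witnessed by a nonzero pair $(v,w)$ satisfying $p(v)=a\,q(w)$. When $q(w)\neq 0$, we have $q(w)\in D_F(q)$ and $p(v)=a\,q(w)\neq 0$, so $p(v)\in D_F(p)$ and $a=p(v)/q(w)\in D_F(p)D_F(q)$ by the quotient description.

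The only point requiring genuine care is the degenerate case $q(w)=0$, which forces $p(v)=0$ as well. Here I would argue that one of $p,q$ is isotropic: if $w\neq 0$ then $q$ is isotropic, whereas if $w=0$ then $v\neq 0$ and $p$ is isotropic. In either eventuality the relevant value set is all of $F^{\times}$, as recorded in the preliminaries, so $D_F(p)D_F(q)=F^{\times}$ and the claimed membership holds trivially. This case analysis, rather than any substantive computation, is the main obstacle, and it is resolved simply by recalling that isotropic forms are universal.
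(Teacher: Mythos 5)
Your proof is correct and follows essentially the same route as the paper's: both directions rest on the closure of value sets under inverses (your $1/c\in D_F(q)$ is the paper's $1\in D_F(dq)$ in disguise) and on reading a nontrivial zero of $p\perp -aq$ as the equation $p(v)=a\,q(w)$. The only difference is organizational: the paper dispatches the case where $p$ or $q$ is isotropic at the outset, whereas you absorb it into the degenerate case $q(w)=0$ of the reverse inclusion; the underlying case analysis is the same.
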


\begin{proof} The statement clearly holds if either $p$ or $q$ is isotropic. Thus, assuming that $p$ and $q$ are anisotropic, we have that $p\perp -a q$ is isotropic if and only if there exist non-zero vectors $v$ and $w$ such that $p(v)-aq(w)=0$. Thus, $p(v)=aq(w)\neq 0$, whereby $a=p(v)(q(w))^{-1}$. Hence, $$a=p(v)\left(\frac 1 {q(w)}\right)=p(v)q\left(\frac w {q(w)}\right)\in D_F(p)D_F(q).$$ As $1\in D_F(dq)$ for $d\in D_F(q)$, we have that $p\perp -cdq$ is isotropic for $c\in D_F(p)$.
\end{proof}



We let $H_F(q)=\{a\in F^{\times}\mid \< 1,-a\>\otimes q\text{ is isotropic}\}$, whereby $H_F(q)=D_F(q)D_F(q)$ in accordance with Lemma~\ref{R}. As with $D_F(q)$ and $G_F(q)$, we may restrict our attention to the square classes contained in $H_F(q)$, since $H_F(q)=(F^{\x})^2H_F(q)$. Clearly we have that $(F^{\x})^2\subseteq G_F(q)\subseteq H_F(q)$ for all forms $q$ over $F$. Moreover, if $1\in D_F(q)$, then we have that $$(F^{\x})^2\subseteq G_F(q)\subseteq D_F(q)\subseteq H_F(q).$$

If $q$ is isotropic over $F$, then $q$ is a group form over $F$, with $D_F(q)=F^{\x}=H_F(q)$ in this case. Our opening result records that Lemma~\ref{R} may be applied to characterise group forms.







\begin{prop}\label{group1} A form $q$ is a group form over $F$ if and only if $H_F(q)\subseteq D_F(q)$.\end{prop}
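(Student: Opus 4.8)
The plan is to verify directly that $D_F(q)$ satisfies the subgroup axioms, having first observed that all but one of them hold automatically for the value set of any form. The forward implication is immediate: if $q$ is a group form, then $D_F(q)$ is closed under multiplication, so $D_F(q)D_F(q)\subseteq D_F(q)$, which is precisely the assertion $H_F(q)\subseteq D_F(q)$ once we identify $D_F(q)D_F(q)$ with $H_F(q)$ via Lemma~\ref{R}.

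For the reverse implication, I would invoke the standard subgroup criterion and argue that closure under products is the only substantive condition to check. Indeed, $D_F(q)$ is non-empty, since $q$ is a non-degenerate form of positive dimension and thus represents some element of $F^{\times}$. Moreover, $D_F(q)$ is a union of square classes, because $q(cv)=c^2q(v)$ for all $c\in F^{\times}$. The crucial point is that this square-class structure forces closure under inverses for free: given $a\in D_F(q)$, one has $a^{-1}=a\,(a^{-1})^2\in(F^{\times})^2D_F(q)=D_F(q)$. Consequently, $D_F(q)$ is a subgroup of $F^{\times}$ if and only if it is closed under multiplication, that is, if and only if $D_F(q)D_F(q)\subseteq D_F(q)$; rewriting $D_F(q)D_F(q)$ as $H_F(q)$ by Lemma~\ref{R} then yields the claimed equivalence.

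Since non-emptiness and closure under inverses come for free, I do not expect a genuine obstacle here. The entire content of the statement is the identification of $H_F(q)$ with $D_F(q)D_F(q)$, which is supplied by Lemma~\ref{R}. The only subtlety worth flagging is the recognition that the group-form hypothesis contributes nothing beyond multiplicative closure, so that the characterisation collapses cleanly to the single containment $H_F(q)\subseteq D_F(q)$.
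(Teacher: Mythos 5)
Your proposal is correct and follows essentially the same route as the paper: both reduce the subgroup condition to multiplicative closure after noting that non-emptiness and closure under inverses are automatic (your computation $a^{-1}=a(a^{-1})^2$ is just a rephrasing of the paper's $q(a^{-1}v)=(a^{-1})^2q(v)=a^{-1}$), and both then identify $D_F(q)D_F(q)$ with $H_F(q)$ via Lemma~\ref{R}. No gaps.
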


\begin{proof} The non-empty set $D_F(q)$ is clearly associative. Letting $a\in D_F(q)$, there exists a non-zero vector $v$ such that $q(v)=a$, whereby $q(a^{-1}v)=(a^{-1})^{2}q(v)=a^{-1}\in D_F(q)$. Thus, $q$ is a group form over $F$ if and only if $D_F(q)D_F(q)\subseteq D_F(q)$, whereby the result follows by invoking Lemma~\ref{R}.
\end{proof}


As group forms represent $1$, we thus obtain the following corollary. 

\begin{cor}\label{group2} A form $q$ is a group form over $F$ if and only if $H_F(q) =D_F(q)$.\end{cor}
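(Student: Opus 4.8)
The plan is to treat this as a direct strengthening of Proposition~\ref{group1}, upgrading the inclusion $H_F(q)\subseteq D_F(q)$ to the equality $H_F(q)=D_F(q)$. Since Proposition~\ref{group1} already supplies one containment in each direction implicitly, I would simply prove the two implications separately, in each case reducing to facts recorded just before the statement.

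For the backward implication, suppose $H_F(q)=D_F(q)$. Then in particular $H_F(q)\subseteq D_F(q)$, so Proposition~\ref{group1} immediately gives that $q$ is a group form over $F$; nothing further is needed here.

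For the forward implication, suppose $q$ is a group form over $F$. Proposition~\ref{group1} yields the inclusion $H_F(q)\subseteq D_F(q)$, so it remains only to establish the reverse inclusion $D_F(q)\subseteq H_F(q)$. The key observation is that a group form represents $1$: since $D_F(q)$ is by hypothesis a subgroup of $F^{\x}$, it contains the identity, i.e.\ $1\in D_F(q)$. With this in hand, the displayed chain of inclusions recorded earlier (valid precisely when $1\in D_F(q)$) gives $D_F(q)\subseteq H_F(q)$. Combining the two inclusions produces $H_F(q)=D_F(q)$, as required.

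I do not anticipate any real obstacle, as this is a formal consequence of Proposition~\ref{group1} together with the structural remarks preceding it. The only point deserving care is the forward direction's use of $1\in D_F(q)$: one must invoke the group hypothesis to guarantee that $q$ represents $1$, since the inclusion $D_F(q)\subseteq H_F(q)$ is not available for an arbitrary form. Once that identity-membership is noted, the equality follows with no computation.
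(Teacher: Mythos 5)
Your proof is correct and matches the paper's reasoning exactly: the paper derives this corollary from Proposition~\ref{group1} with the single remark that group forms represent $1$, which (via the inclusion $D_F(q)\subseteq H_F(q)$ valid whenever $1\in D_F(q)$) yields the reverse containment, just as you argue. No gaps or differences to report.
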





In accordance with our definition of roundness, if $q$ is isotropic over $F$, then $q$ is round over $F$ if and only if $q$ is hyperbolic or the non-zero form $q_{\text{an}}$ is such that $D_F\left(q_{\text{an}}\right)=F^{\x}=G_F\left(q_{\text{an}}\right)$. This observation follows from the fact that $G_F(q)=G_F(q_{\text{an}})$, in accordance with Witt Cancellation and the fact that $D_F\left(q\right)=F^{\x}$ for $q$ isotropic over $F$. Per \cite[Example X.$1.15(5)$]{LAM}, the form $q\simeq\< 1,-1,1,1\>$ over $\mathbb{F}_3$ is an example of an isotropic round form that is not hyperbolic.









%

%


\begin{cor}\label{genroundchar} A form $q$ is round over $F$ if and only if $1\in D_F(q)$ and $H_F(q)\subseteq G_F(q)$.\end{cor}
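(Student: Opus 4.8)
The plan is to prove the biconditional in Corollary~\ref{genroundchar} by reducing roundness to the conjunction of two conditions: that $q$ represents $1$, and that $H_F(q)\subseteq G_F(q)$. Recall from the discussion preceding the statement that a form $q$ is round over $F$ precisely when $q$ is a group form with $D_F(q)=G_F(q)$, and that this is equivalent to $1\in D_F(q)$ together with $D_F(q)\subseteq G_F(q)$. Thus the task is essentially to translate the condition $D_F(q)\subseteq G_F(q)$ into the condition $H_F(q)\subseteq G_F(q)$, given that $q$ represents $1$. The key algebraic inputs are the chain $(F^{\x})^2\subseteq G_F(q)\subseteq H_F(q)$ recorded in the excerpt, the fact that $G_F(q)$ is a group, and the identification $H_F(q)=D_F(q)D_F(q)$ from Lemma~\ref{R} (equivalently Corollary~\ref{group2}).

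For the forward direction, I would assume $q$ is round. Then by definition $q$ is a group form, so it represents $1$, giving the first condition. Being round means $D_F(q)=G_F(q)$, and since $q$ is a group form, $H_F(q)=D_F(q)$ by Corollary~\ref{group2}; hence $H_F(q)=D_F(q)=G_F(q)$, which in particular yields $H_F(q)\subseteq G_F(q)$ as required.

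For the reverse direction, I would assume $1\in D_F(q)$ and $H_F(q)\subseteq G_F(q)$. From $1\in D_F(q)$ we have the chain $D_F(q)\subseteq H_F(q)$, so the hypothesis immediately gives $D_F(q)\subseteq G_F(q)$. Combined with the general inclusion $G_F(q)\subseteq H_F(q)\subseteq G_F(q)$ forcing $G_F(q)=H_F(q)$, and with the observation (from the excerpt's discussion of roundness) that $D_F(q)\subseteq G_F(q)$ together with $1\in D_F(q)$ yields $G_F(q)\subseteq D_F(q)$, we conclude $D_F(q)=G_F(q)$; in particular $D_F(q)$ is a group, so $q$ is a group form that is round.

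I do not anticipate a serious obstacle here, as the result is essentially a repackaging of the inclusions $(F^{\x})^2\subseteq G_F(q)\subseteq H_F(q)=D_F(q)D_F(q)$ together with the self-reciprocal property of value sets established in Proposition~\ref{group1}. The only point requiring mild care is making sure the hypothesis $1\in D_F(q)$ is genuinely used to obtain $D_F(q)\subseteq H_F(q)$ (so that the hypothesis $H_F(q)\subseteq G_F(q)$ can be applied to elements of $D_F(q)$), since without it one cannot in general conclude $D_F(q)\subseteq G_F(q)$ from a statement about $H_F(q)$. I would therefore present the argument as two short implications, invoking Corollary~\ref{group2} and the preceding inclusion chain rather than re-deriving them.
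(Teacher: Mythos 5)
Your proposal is correct and follows essentially the same route as the paper: the forward direction extracts $H_F(q)\subseteq D_F(q)=G_F(q)$ from the group property (via Corollary~\ref{group2}, where the paper cites Proposition~\ref{group1}, an immaterial difference), and the reverse direction squeezes the chain $G_F(q)\subseteq D_F(q)\subseteq H_F(q)$, valid since $1\in D_F(q)$, against the hypothesis $H_F(q)\subseteq G_F(q)$ to get $D_F(q)=G_F(q)$. Your added remark that $D_F(q)=G_F(q)$ automatically makes $q$ a group form is a worthwhile explicit check that the paper leaves implicit.
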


\begin{proof} If $q$ is round over $F$, then $1\in D_F(q)=G_F(q)$ and $q$ is a group form over $F$. Invoking Proposition~\ref{group1}, it follows that $H_F(q)\subseteq D_F(q)$, whereby $H_F(q)\subseteq G_F(q)$.

Conversely, as $1\in D_F(q)$, we recall that $G_F(q)\subseteq D_F(q)\subseteq H_F(q)$, whereby the equality $D_F(q)=G_F(q)$ follows from the assumption that $H_F(q)\subseteq G_F(q)$.
\end{proof}



We note that, for $q$ a round form over $F$, the inclusion $H_F(q)\subseteq G_F(q)$ can also be derived from \cite[Proposition 1 and Theorem 1]{WS}.

Addressing the question of distinguishing between the classes of group and round forms over a given field, it is reasonable to restrict one's consideration to those forms that represent one, whereby the preceding characterisation may be simplified.


\begin{cor}\label{roundchar} Let $q$ be a form such that $1\in D_F(q)$. The following are equivalent:
\begin{enumerate}[$(i)$]
\item $q$ is round over $F$,
\item $H_F(q)\subseteq G_F(q)$,
\item $q\otimes \rho$ is anisotropic or hyperbolic for every $1$-fold Pfister form $\rho$ over $F$,
\item $q\otimes \beta$ is anisotropic or hyperbolic for every $2$-dimensional form $\beta$ over $F$,
\item $q\otimes \pi$ is anisotropic or hyperbolic for every $n$-fold Pfister form $\pi$ over $F$, $n\in\mathbb{N}$.
\end{enumerate}
\end{cor}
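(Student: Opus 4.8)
The plan is to run through the listed conditions by first disposing of the equivalences that are immediate from the definitions, and then to isolate the one substantive implication, namely that roundness of $q$ forces $q\otimes\pi$ to be anisotropic or hyperbolic for every Pfister form $\pi$. The equivalence of $(i)$ and $(ii)$ is exactly Corollary~\ref{genroundchar}, since $1\in D_F(q)$ by hypothesis. For $(ii)\Leftrightarrow(iii)$ I would use the characterisations $H_F(q)=\{a\in F^{\times}\mid\<1,-a\>\otimes q\text{ isotropic}\}$ and $G_F(q)=\{a\in F^{\times}\mid\<1,-a\>\otimes q\text{ hyperbolic}\}$: the inclusion $H_F(q)\subseteq G_F(q)$ says precisely that, for every $a\in F^{\times}$, isotropy of $\<1,-a\>\otimes q$ entails its hyperbolicity, i.e.\ that $\<1,-a\>\otimes q$ is anisotropic or hyperbolic, and as $-a$ ranges over $F^{\times}$ this is exactly $(iii)$. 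The equivalence $(iii)\Leftrightarrow(iv)$ is a matter of scaling: a $2$-dimensional form $\beta=\<b,c\>$ satisfies $q\otimes\beta\simeq b(q\otimes\<1,bc\>)$, being anisotropic or hyperbolic is insensitive to the scalar $b$, and $bc$ runs through $F^{\times}$ as $\beta$ varies. Finally $(v)\Rightarrow(iii)$ is the case $n=1$.

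It then remains to prove $(i)\Rightarrow(v)$, which I would carry out by induction on $n$, the case $n=1$ being $(iii)$. Writing an $n$-fold Pfister form as $\pi\simeq\pi'\otimes\<1,a\>$ with $\pi'$ an $(n-1)$-fold Pfister form, the inductive hypothesis gives that $\psi:=q\otimes\pi'$ is anisotropic or hyperbolic. If $\psi$ is hyperbolic then $q\otimes\pi\simeq\psi\otimes\<1,a\>$ is hyperbolic and there is nothing more to do. If $\psi$ is anisotropic, the step reduces to the key claim that $\psi$ is again round: granting this, Corollary~\ref{genroundchar} yields $H_F(\psi)\subseteq G_F(\psi)$, and the reasoning of $(ii)\Rightarrow(iii)$ applied to $\psi$ shows that $\psi\otimes\<1,a\>=q\otimes\pi$ is anisotropic or hyperbolic. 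One must note here that once a stage is hyperbolic it remains hyperbolic under further tensoring, so the induction propagates correctly in either case.

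Thus the crux is the lemma that the tensor product of a round form with a binary Pfister form is round (when anisotropic). To prove that $\psi:=q\otimes\<1,a\>$ is round I would establish $D_F(\psi)\subseteq G_F(\psi)$, which together with $1\in D_F(\psi)$ gives roundness. Given $t\in D_F(\psi)$, write $t=s_1+as_2$ with $s_1,s_2\in D_F(q)\cup\{0\}$. The degenerate cases $s_1=0$ or $s_2=0$ are handled by noting that $D_F(q)=G_F(q)\subseteq G_F(\psi)$ (as $s\in G_F(q)$ gives $s\psi\simeq\psi$) and that $a\in G_F(\psi)$. When $s_1,s_2\in D_F(q)=G_F(q)$ are both non-zero, I factor $t=s_1\bigl(1+as_2s_1^{-1}\bigr)$. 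Since $s_2s_1^{-1}\in G_F(q)$ we have $\<1,as_2s_1^{-1}\>\otimes q\simeq\<1,a\>\otimes q=\psi$, and since the binary Pfister form $\<1,as_2s_1^{-1}\>$ is round, the value $1+as_2s_1^{-1}$ lies in $G_F(\<1,as_2s_1^{-1}\>)$; tensoring the resulting similarity by $q$ shows $1+as_2s_1^{-1}\in G_F(\psi)$. As also $s_1\in G_F(q)\subseteq G_F(\psi)$ and $G_F(\psi)$ is a group, we conclude $t\in G_F(\psi)$.

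The main obstacle is precisely this multiplicativity lemma. Its proof hinges on the factorisation $t=s_1(1+as_2s_1^{-1})$, which exploits the roundness of $q$ (to place $s_1,s_2,s_2s_1^{-1}$ in $G_F(q)$) together with the roundness of the auxiliary binary Pfister form $\<1,as_2s_1^{-1}\>$ (to place $1+as_2s_1^{-1}$ in its group of similarity factors). The remaining care is bookkeeping: one checks that the binary form $\<1,as_2s_1^{-1}\>$ is anisotropic in the relevant case (else $\psi$ would be isotropic), and confirms that the degenerate and isotropic possibilities are absorbed by the ``anisotropic or hyperbolic'' alternative so that the induction closes.
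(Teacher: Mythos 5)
Your proposal is correct, and on the routine parts it matches the paper exactly: $(i)$--$(iii)$ via Corollary~\ref{genroundchar}, $(iii)\Leftrightarrow(iv)$ by scaling, and $(v)\Rightarrow(iii)$ trivially. Where you genuinely diverge is the substantive implication $(i)\Rightarrow(v)$: the paper disposes of it in one line by citing Witt's Round Form Theorem \cite[Theorem X.$1.14$]{LAM} (if $q$ is round, then $q\otimes\pi$ is round for every Pfister form $\pi$) and then repeatedly applying statement $(iii)$, whereas you reprove the needed case of that theorem from scratch. Your key lemma --- an anisotropic product of a round form with a binary Pfister form is again round --- proved via the factorisation $t=s_1\left(1+as_2s_1^{-1}\right)$ with $s_2s_1^{-1}\in G_F(q)$ and the roundness of the auxiliary binary Pfister form $\<1,as_2s_1^{-1}\>$, is precisely Witt's classical argument, so your route is self-contained where the paper's is a citation. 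Two bookkeeping points, both fixable and both of which you flag: formally your induction hypothesis should carry roundness along with anisotropy (or, equivalently, one iterates the lemma through the intermediate products $q\otimes\<1,a_1\>$, $q\otimes\<1,a_1\>\otimes\<1,a_2\>$, \dots, which are anisotropic as subforms of the anisotropic $\psi=q\otimes\pi'$); and note that the corollary does not assume $q$ anisotropic, but your restriction of the lemma to anisotropic $\psi$ is harmless, since if $q$ is isotropic then $q\otimes\pi'$ is isotropic and hence hyperbolic by the inductive hypothesis, so only the hyperbolic branch of your case split ever occurs. In terms of trade-offs: the paper's proof is shorter and leans on the standard literature, while yours makes the corollary self-contained and exhibits exactly where roundness, as opposed to the weaker group property, enters.
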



\begin{proof} Statements $(i)$, $(ii)$ and $(iii)$ are equivalent by Corollary~\ref{genroundchar}. Statements $(iii)$ and $(iv)$ are equivalent, as scaling does not affect isotropy. Statement $(v)$ clearly implies $(iii)$. Assuming $(i)$, it follows that $q\otimes \pi$ is round for every Pfister form $\pi$ over $F$, by Witt's Round Form Theorem \cite[Theorem X.$1.14$]{LAM}. By repeatedly invoking Statement $(iii)$, we see that $(v)$ follows.
\end{proof}




In the context of forms that represent one, we remark that scalar multiples of Pfister forms are Pfister forms.

\begin{lemma}\label{simone} A form $q$ over $F$ is a Pfister form if and only if $q$ is similar to a Pfister form and $1\in D_F(q)$.
\end{lemma}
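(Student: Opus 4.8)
The forward implication is immediate, so I would dispose of it first: a Pfister form is similar to itself, and every Pfister form represents $1$, since $\langle 1,a_1\rangle\otimes\cdots\otimes\langle 1,a_n\rangle$ carries $1$ in its leading entry. The substance of the lemma lies in the converse, and here the plan is to use the roundness of Pfister forms to absorb the ambient scalar. Accordingly, I would start from the hypotheses $q\simeq c\pi$ for some $c\in F^{\times}$ and some Pfister form $\pi$, together with $1\in D_F(q)$.

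The key step is to translate the condition $1\in D_F(q)$ into a statement about $\pi$ and then invoke roundness. Since $1\in D_F(q)=D_F(c\pi)$, there is a non-zero vector $v$ with $c\pi(v)=1$, so $\pi(v)=c^{-1}$ and hence $c^{-1}\in D_F(\pi)$. As Pfister forms are round, recorded above, we have $D_F(\pi)=G_F(\pi)$, so $c^{-1}\in G_F(\pi)$ and therefore $c^{-1}\pi\simeq\pi$. Scaling by $c$ then yields $\pi\simeq c\pi\simeq q$, which exhibits $q$ as a Pfister form and completes the argument.

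The only point I would be careful about is whether this single line of reasoning genuinely covers the isotropic case, and I expect this to be the one thing worth verifying rather than a real obstacle. If $\pi$ is isotropic it is hyperbolic, so $D_F(\pi)=F^{\times}=G_F(\pi)$ and the membership $c^{-1}\in G_F(\pi)$ holds automatically; the conclusion $q\simeq c\pi\simeq\pi$ then simply identifies $q$ with a hyperbolic Pfister form. Thus no separate treatment of the anisotropic and isotropic cases is needed, and the sole nontrivial ingredient remains the roundness of Pfister forms.
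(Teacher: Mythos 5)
Your proof is correct and takes essentially the same route as the paper: both arguments extract $c^{-1}\in D_F(\pi)$ from $1\in D_F(q)$ and invoke the roundness of Pfister forms, i.e.\ $D_F(\pi)=G_F(\pi)$, to get $c\pi\simeq\pi$ and hence $q\simeq\pi$. Your closing check of the isotropic case is harmless but unnecessary, since roundness of Pfister forms holds with no anisotropy hypothesis, which is why the paper's one-line proof needs no case distinction.
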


\begin{proof} To establish the right-to-left implication, we let $q\simeq a\pi$ for $a\in F^{\x}$ and $\pi$ a Pfister form over $F$. As $1\in D_F(q)$, it follows that $a\in D_F(\pi)$, whereby $q\simeq \pi$ as $\pi$ is round.
\end{proof}

We can apply the above characterisations of round and group forms to obtain a new characterisation of Pfister forms.

\begin{theorem}\label{transgroup} Let $q$ be an anisotropic form. The following are equivalent: \begin{enumerate}[$(i)$]
\item $q$ is a Pfister form over $F$, 
\smallskip
\item $q$ is a round form over $K=F(\!(x)\!)(q\otimes \< 1,-x\>)$,
\smallskip
\item $q$ is a group form over $K=F(\!(x)\!)(q\otimes \< 1,-x\>)$.
\end{enumerate}
\end{theorem}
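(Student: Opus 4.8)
The plan is to establish the cycle $(i)\Rightarrow(ii)\Rightarrow(iii)\Rightarrow(i)$. For $(i)\Rightarrow(ii)$, if $q$ is a Pfister form over $F$ then $q_K$ is a Pfister form over $K$, as Pfister forms remain Pfister under scalar extension; such a form is round over $K$, being either anisotropic (\cite[Theorem X.1.8]{LAM}) or else hyperbolic (\cite[Theorem X.1.7]{LAM}). The implication $(ii)\Rightarrow(iii)$ holds by definition, since round forms are group forms. It thus remains to prove $(iii)\Rightarrow(i)$, which carries all the content.

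Set $\phi=q\otimes\<1,-x\>=q\perp -xq$, so that $K=F(\!(x)\!)(\phi)$, and put $\psi=q\perp\<-x\>$; both $\phi$ and $\psi$ are anisotropic over $F(\!(x)\!)$ by Lemma~\ref{Hlemma}. As $\phi_K$ is isotropic by construction we have $x\in H_K(q)$, so the group hypothesis together with Corollary~\ref{group2} gives $x\in D_K(q)$; equivalently, $\psi_K$ is isotropic. The heart of the argument is then a sharp application of Theorem~\ref{km} to the anisotropic forms $\psi$ and $\phi$: part $(i)$ yields $\dim\phi - i_1(\phi)\leqs\dim\psi - i_1(\psi)$, whence $i_1(\phi)\geqs(n-1)+i_1(\psi)\geqs n$, where $n=\dim q$ and we use $i_1(\psi)\geqs 1$. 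Since $i_1(\phi)\leqs\frac 1 2\dim\phi=n$, we conclude that $i_1(\phi)=n$ and $i_1(\psi)=1$. The first equality forces $\phi_K$ to be hyperbolic, so $\phi$ is similar to a Pfister form over $F(\!(x)\!)$ by \cite[Theorem X.4.14]{LAM}; the second yields equality in Theorem~\ref{km}$(i)$, so part $(ii)$ shows $\phi_{F(\!(x)\!)(\psi)}$ to be isotropic, hence hyperbolic. I expect this simultaneous extraction to be the main obstacle: the group condition must be leveraged precisely to saturate the Karpenko--Merkurjev inequality.

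It then remains to descend to $F$. First, applying the Cassels--Pfister Subform Theorem \cite[Theorem X.4.5]{LAM} to $\psi$ and $\phi$ (using that $\phi_{F(\!(x)\!)(\psi)}$ is hyperbolic), for any $a\in D_F(q)$ we obtain $a\psi\subseteq a\phi$, hence $\psi\subseteq\phi$; cancelling the common subform $q$ gives $\<-x\>\subseteq -xq$, so that $q$ represents $1$ over $F(\!(x)\!)$, and therefore over $F$ by Lemma~\ref{Hlemma}. Second, passing to $F(q)(\!(x)\!)$ when $\dim q\geqs 2$, the form $q_{F(q)}$ is isotropic, so $\phi_{F(q)(\!(x)\!)}=q_{F(q)}\perp -xq_{F(q)}$ is an isotropic form similar to a Pfister form and hence hyperbolic; Lemma~\ref{Hlemma} then forces $q_{F(q)}$ to be hyperbolic, so $q$ is similar to a Pfister form over $F$ by \cite[Theorem X.4.14]{LAM} (the case $\dim q=1$ being immediate, as $q$ is then similar to $\<1\>$). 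Finally, $q$ is similar to a Pfister form and represents $1$, so $q$ is a Pfister form over $F$ by Lemma~\ref{simone}.
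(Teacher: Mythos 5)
Your proof is correct, and its engine coincides with the paper's: use the group hypothesis and Corollary~\ref{group2} to turn $x\in H_K(q)$ into $x\in D_K(q)$, then play $\psi=q\perp\<-x\>$ off against $\phi=q\otimes\<1,-x\>$ via Theorem~\ref{km} to force $i_1(\phi)=\dim q$, so that $\phi$ is hyperbolic over $K$, with \cite[Theorem X.$4.14$]{LAM} and Lemma~\ref{simone} finishing. But your supporting steps genuinely diverge in three places, each a simplification. First, your numerical bookkeeping uses only the trivial bounds $i_1(\psi)\geqs 1$ and $i_1(\phi)\leqs\frac 1 2\dim\phi$, whereas the paper pins both sides of the Karpenko--Merkurjev inequality at a common power of two via Theorem~\ref{H95} and \cite[Exercise I.$16$]{LAM} (incidentally deducing $\dim q=2^n$, which neither argument actually needs). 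Second, to get $1\in D_F(q)$ the paper proves the stronger, unconditional claim $D_K(q)\cap F^{\x}=D_F(q)$ by a contradiction argument routed through Izhboldin's descent lemma \cite[Lemma 5.4 $(3)$]{Ilow}; you instead exploit the saturated inequality: Theorem~\ref{km}~$(ii)$ makes $\phi$ isotropic, hence hyperbolic (being by then similar to a Pfister form), over $F(\!(x)\!)(\psi)$, and the Cassels--Pfister Subform Theorem plus Witt cancellation yields $\<-x\>\subseteq -xq$, i.e. $1\in D_{F(\!(x)\!)}(q)$, which descends to $F$ since anisotropic forms over $F$ stay anisotropic over $F(\!(x)\!)$ — note that your choice of a common scalar $a\in D_F(q)$ in the subform theorem is legitimate precisely because $q$ is a subform of both $\psi$ and $\phi$, a point worth making explicit. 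Third, for the final descent you replace the citation to \cite[Proposition 3.2]{OS2} by the Laurent-series trick the paper itself deploys in Proposition~\ref{isoext}: $\phi$, being similar to a Pfister form over $F(\!(x)\!)$, becomes isotropic and hence hyperbolic over $F(q)(\!(x)\!)$, and Lemma~\ref{Hlemma} then forces $q_{F(q)}$ to be hyperbolic. The net effect is that your route is self-contained modulo \cite{KM} and the standard function-field facts, avoiding \cite{Ilow}, \cite{OS2} and Hoffmann's Separation Theorem altogether (and you treat $\dim q=1$ explicitly, which the paper leaves tacit); what the paper's longer route buys is the by-product $D_K(q)\cap F^{\x}=D_F(q)$, valid for every anisotropic $q$ without any group hypothesis.
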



\begin{proof} As Pfister forms are round, and round forms are group, it suffices to prove that $(iii)$ implies $(i)$.

The field $K$ is the function field of $q\otimes \< 1,-x\>$ over $F(\!(x)\!)$, which is an anisotropic form in accordance with Lemma~\ref{Hlemma}. We will first show that $D_K(q)\cap F^{\x}=D_F(q)$. Let $a\in F^{\x}$ be such that $q\perp \< -a\>$ is anisotropic over $F$ and suppose, for the sake of contradiction, that $q\perp \< -a\>$ is isotropic over $K$. Invoking Theorem~\ref{km}~$(i)$, we have that $$\dim(q\perp\< -a\>)-i_1(q\perp\< -a\>)\geqs \dim (q\otimes \< 1,-x\>)-i_1(q\otimes \< 1,-x\>).$$ In accordance with Theorem~\ref{H95} and \cite[Exercise I.$16$]{LAM}, there exists $n\in\mathbb{N}$ such that $$\dim(q\perp\< -a\>)-i_1(q\perp\< -a\>)= \dim (q\otimes \< 1,-x\>)-i_1(q\otimes \< 1,-x\>)=2^n,$$ whereby $\dim q=2^n$. Hence, $q\otimes \< 1,-x\>$ is isotropic over $F(\!(x)\!)(q\perp\< -a\>)$, in accordance with Theorem~\ref{km}~$(ii)$. Invoking \cite[Lemma 5.4 $(3)$]{Ilow}, it thus follows that $q$ is isotropic over $F(q\perp\< -a\>)$. However, as $\dim q=2^n$, this contradicts Theorem~\ref{H95}, thereby establishing the claim. 


We have that $1\in D_K(q)$ by assumption, whereby $1\in D_F(q)$ by the statement proven above. As $x\in H_K(q)$ by construction, it follows that $x\in D_K(q)$ in accordance with Proposition~\ref{group1}, whereby the form $q\perp\< -x\>$ becomes isotropic over $F(\!(x)\!)(q\otimes \< 1,-x\>)$. Arguing as above, it follows that, for some $n\in\mathbb{N}$, we have that $$\dim(q\perp\< -x\>)-i_1(q\perp\< -x\>)= \dim (q\otimes \< 1,-x\>)-i_1(q\otimes \< 1,-x\>)=2^n,$$ and that $\dim q=2^n$. Hence, $i_1(q\otimes \< 1,-x\>)=\dim q$, in accordance with this equality, whereby $q\otimes \< 1,-x\>$ becomes hyperbolic over $F(\!(x)\!)(q\otimes \< 1,-x\>)$. Invoking \cite[Proposition 3.2]{OS2}, it follows that $q$ is hyperbolic over $F(q)$. Thus, $q$ is similar to a Pfister form over $F$ by \cite[Theorem X.$4.14$]{LAM}. As $1\in D_F(q)$, the result follows by invoking Lemma~\ref{simone}.
\end{proof}




We can invoke the above result to re-prove the following characterisations of Pfister forms due to Pfister (see \cite[Satz 5, Theorem 2]{P}, \cite[Theorem $4.4$, p.153]{S} or \cite[Theorem $23.2$]{EKM}).

\begin{cor}\label{Pfisterreproof} Let $q$ be an anisotropic form over $F$. The following are equivalent: \begin{enumerate}[$(i)$]
\item $q$ is a Pfister form over $F$,
\smallskip
\item $q$ is round over $K$ for every extension $K/F$,
\smallskip
\item $q$ is group over $K$ for every extension $K/F$.
\end{enumerate}
\end{cor}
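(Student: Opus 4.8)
The plan is to establish the cycle of implications $(i)\Rightarrow(ii)\Rightarrow(iii)\Rightarrow(i)$, exploiting the fact that two of the three implications are essentially free. First I would observe that $(i)\Rightarrow(ii)$ follows immediately from the roundness of Pfister forms (\cite[Theorem X.$1.8$]{LAM}) together with Witt's Round Form Theorem: if $q$ is a Pfister form over $F$, then $q_K$ remains a Pfister form over every extension $K/F$, and hence is round over $K$. The implication $(ii)\Rightarrow(iii)$ is trivial, since every round form is by definition a group form. Thus the entire content of the corollary is concentrated in the implication $(iii)\Rightarrow(i)$.

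For the hard direction, the natural strategy is to \emph{specialise the universal quantifier to a single well-chosen field} and then invoke Theorem~\ref{transgroup}. Concretely, if $q$ is a group form over every extension $K/F$, then in particular $q$ is a group form over the specific field $K=F(\!(x)\!)(q\otimes\<1,-x\>)$ appearing in the statement of Theorem~\ref{transgroup}. But Theorem~\ref{transgroup}~$(iii)\Rightarrow(i)$ tells us precisely that being a group form over this field forces $q$ to be a Pfister form over $F$. So the corollary is deduced by instantiating the hypothesis at one carefully selected extension and feeding it into the theorem already proven.

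The one point requiring care is that the hypothesis of Theorem~\ref{transgroup} is stated for an anisotropic form $q$, and that $q$ must remain (or be understood) as a form over $F$ before passing to $K$. Here $q$ is anisotropic over $F$ by assumption, and $K$ is an extension of $F$ via the tower $F\subseteq F(\!(x)\!)\subseteq F(\!(x)\!)(q\otimes\<1,-x\>)=K$, so the hypothesis $(iii)$ of the corollary indeed applies at $K$ and matches exactly the input of Theorem~\ref{transgroup}. No genuine obstacle arises: the substantive work has already been carried out in proving Theorem~\ref{transgroup}, and the corollary is a clean reduction to that theorem via specialisation of the "for every extension" hypothesis to a single generic field.

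I do not anticipate needing any new machinery beyond Theorem~\ref{transgroup}, the roundness of Pfister forms, and the elementary inclusion (round $\Rightarrow$ group) recorded earlier. The proof should therefore be short, with the only thing worth spelling out being the explicit choice of $K=F(\!(x)\!)(q\otimes\<1,-x\>)$ so that the reader sees how the universally quantified hypothesis is narrowed down to the precise extension for which Theorem~\ref{transgroup} applies.
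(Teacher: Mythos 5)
Your proposal is correct and matches the paper's intended argument exactly: the corollary is presented there as an immediate consequence of Theorem~\ref{transgroup}, with $(i)\Rightarrow(ii)$ from the roundness of Pfister forms, $(ii)\Rightarrow(iii)$ trivial, and $(iii)\Rightarrow(i)$ by specialising the hypothesis to $K=F(\!(x)\!)(q\otimes\<1,-x\>)$. Your only superfluous ingredient is the appeal to Witt's Round Form Theorem in $(i)\Rightarrow(ii)$, which is not needed since \cite[Theorem X.$1.8$]{LAM} applied over $K$ already suffices.
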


Similarly, we obtain the following corollary of Theorem~\ref{transgroup}.

\begin{cor}\label{Pfisterreformulation} For $q$ an anisotropic form over $F$, let $K=F(\!(x)\!)(q\otimes \< 1,-x\>)$.
\begin{enumerate}[$(i)$]
\item $q$ is group over every extension of $F$ if and only if $q$ is group over $K$,
\smallskip
\item $q$ is round over every extension of $F$ if and only if $q$ is round over $K$.
\end{enumerate}
\end{cor}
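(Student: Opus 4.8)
The plan is to derive Corollary~\ref{Pfisterreformulation} directly from Theorem~\ref{transgroup}, treating each part as a biconditional where one direction is immediate and the other is a consequence of the trichotomy already established. For part $(i)$, the forward implication is trivial: if $q$ is a group form over \emph{every} extension of $F$, then in particular it is a group form over the specific extension $K=F(\!(x)\!)(q\otimes\<1,-x\>)$. The substance lies in the reverse implication, and here I would invoke Theorem~\ref{transgroup} to convert the hypothesis into a structural statement about $q$ itself.

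First I would prove part $(i)$. Suppose $q$ is a group form over $K$. By the equivalence $(iii)\Rightarrow(i)$ of Theorem~\ref{transgroup}, this forces $q$ to be a Pfister form over $F$. Pfister forms are round, and by Witt's Round Form Theorem \cite[Theorem X.$1.14$]{LAM}, they remain round over every field extension; since round forms are group forms, it follows that $q$ is a group form over every extension of $F$. This closes the reverse implication and hence establishes the equivalence. Part $(ii)$ would proceed in exactly the same manner, using instead the equivalence $(ii)\Leftrightarrow(i)$ of Theorem~\ref{transgroup}: if $q$ is round over $K$, then $q$ is a Pfister form over $F$, and a Pfister form is round over every extension by the same appeal to Witt's theorem. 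The forward implication is again the trivial specialisation to $K$.

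I do not anticipate a genuine obstacle, as essentially all the work has been front-loaded into Theorem~\ref{transgroup}; the corollary is a repackaging exercise. The one point requiring a small amount of care is the passage from ``$q$ is a Pfister form over $F$'' to ``$q$ is round (respectively group) over every extension'': one must confirm that roundness of a Pfister form is preserved under arbitrary scalar extension. This is immediate, since $\pi_K$ is again a Pfister form over $K$ for any Pfister form $\pi$ over $F$ and any extension $K/F$, and Pfister forms are round over their base field. Thus the only content beyond Theorem~\ref{transgroup} is this standard stability of the Pfister property under extension, which I would cite rather than reprove.

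Should one wish to emphasise the parallel structure, I would note that the corollary is precisely the assertion that the abstract property ``group (respectively round) over all extensions'' is detected by the single generic extension $K$; combined with Corollary~\ref{Pfisterreproof}, this exhibits $K=F(\!(x)\!)(q\otimes\<1,-x\>)$ as a universal test field for these properties. The proof would therefore be short, with each part consisting of the trivial specialisation in one direction and a two-step invocation of Theorem~\ref{transgroup} followed by Witt's Round Form Theorem in the other.
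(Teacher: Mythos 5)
Your proposal is correct and matches the paper's intended argument exactly: the paper states this corollary without proof as an immediate consequence of Theorem~\ref{transgroup}, with the trivial specialisation in one direction and, in the other, Theorem~\ref{transgroup} yielding that $q$ is a Pfister form, whence roundness (and the group property) over every extension follows. One small note: the relevant fact is that $\pi_K$ is again a Pfister form and Pfister forms are round \cite[Theorem X.$1.8$]{LAM}, not Witt's Round Form Theorem \cite[Theorem X.$1.14$]{LAM} --- but you correct this yourself in your third paragraph, so the argument stands.
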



\begin{remark}\label{Pfistergoingdown} Per \cite[Theorem $4.4$, p.153]{S}, Pfister established that, for $q$ an anisotropic form of dimension $n$, the three statements in Corollary~\ref{Pfisterreproof} are equivalent to each of the following statements \begin{enumerate}[$(i)$]
\item $q\left(x_1,\ldots ,x_n\right)\in G_{K}(q)\text{ for }K=F\left(x_1,\ldots ,x_{n}\right)$,
\smallskip
\item $q\left(x_1,\ldots ,x_n\right)q\left(x_{n+1},\ldots ,x_{2n}\right)\in D_{K}(q)\text{ for }K=F\left(x_1,\ldots ,x_{2n}\right)$. 
\end{enumerate}
Thus, for $q$ an anisotropic form of dimension $n$, it follows that $q$ is a Pfister form over $F$ if and only if $q$ is a round form over $F\left(x_1,\ldots ,x_{n}\right)$, and that $q$ is a Pfister form over $F$ if and only if group form over $F\left(x_1,\ldots ,x_{2n}\right)$.
\end{remark}








In a similar spirit to the preceding results, we offer the following characterisation of scalar multiples of Pfister forms.

\begin{prop}\label{isoext} Let $q$ be an anisotropic form over $F$. The following are equivalent:
\begin{enumerate}[$(i)$]
\item $q$ is similar to a Pfister form over $F$,
\smallskip
\item $q_K$ is round for all extensions $K/F$ such that $1\in D_K(q)$,
\smallskip
\item $q_K$ is round for all extensions $K/F$ such that $q_K$ is isotropic.
\end{enumerate}
\end{prop}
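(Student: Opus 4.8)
The plan is to prove the three statements equivalent cyclically, as $(i)\Rightarrow(ii)\Rightarrow(iii)\Rightarrow(i)$, with essentially all of the work concentrated in the last implication. The first two are immediate. For $(i)\Rightarrow(ii)$, if $q\simeq a\pi$ with $\pi$ a Pfister form and $K/F$ is any extension with $1\in D_K(q)$, then $q_K$ is similar to the Pfister form $\pi_K$ and represents $1$, so $q_K$ is itself a Pfister form by Lemma~\ref{simone}, and Pfister forms are round. For $(ii)\Rightarrow(iii)$, I would simply observe that if $q_K$ is isotropic then $D_K(q)=F^{\x}$ in the relevant sense, so $1\in D_K(q)$, the hypothesis of $(ii)$ is met, and $q_K$ is round.

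The real content is $(iii)\Rightarrow(i)$, and here the plan is to show that $q$ becomes hyperbolic over its own function field, since for anisotropic $q$ of dimension at least two one has that $q_{F(q)}$ is hyperbolic exactly when $q$ is similar to a Pfister form (the Wadsworth--Knebusch criterion recalled above). The one-dimensional case is trivial, as $\<c\>\simeq c\<1\>$ is similar to the $0$-fold Pfister form $\<1\>$; so I would assume $\dim q\geqs 2$, whereby $q_{F(q)}$ is isotropic. The key device is to feed a transcendental scalar into the group of similarity factors. Setting $L=F(q)(t)$ for an indeterminate $t$, the form $q_L$ is isotropic (as $q$ is already isotropic over $F(q)\subseteq L$), so $(iii)$ applies and $q_L$ is round. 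Being isotropic, $q_L$ satisfies $D_L(q)=L^{\x}=G_L(q)$, so in particular $t\in G_L(q)$, which by definition means that $\<1,-t\>\otimes q_L$ is hyperbolic; this hyperbolicity persists over the larger field $F(q)(\!(t)\!)\supseteq L$.

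To finish, I would read off the Witt index over $F(q)(\!(t)\!)$ by means of Lemma~\ref{Hlemma}. Writing $\<1,-t\>\otimes q\simeq q\perp t(-q)$ as a form over $F(q)(\!(t)\!)$ with residue forms $q$ and $-q$ over $F(q)$, Lemma~\ref{Hlemma} yields $i(\<1,-t\>\otimes q)=i(q_{F(q)})+i(-q_{F(q)})=2\,i(q_{F(q)})$. Hyperbolicity forces this to equal $\tfrac12\dim(\<1,-t\>\otimes q)=\dim q$, so $i(q_{F(q)})=\tfrac12\dim q$, that is, $q_{F(q)}$ is hyperbolic, and the cited criterion then gives that $q$ is similar to a Pfister form.

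I expect the only genuine obstacle to be the passage from the hypothesis of $(iii)$ — which controls roundness only over extensions where $q$ is \emph{isotropic} — to the sought hyperbolicity of $q_{F(q)}$. The point that overcomes it is precisely the choice of the rational function field $L=F(q)(t)$ rather than $F(q)$ itself: adjoining an indeterminate manufactures a generic similarity factor $t$, and Lemma~\ref{Hlemma} then translates the resulting hyperbolicity of $\<1,-t\>\otimes q$ into the numerical statement $i(q_{F(q)})=\tfrac12\dim q$. It is worth noting that this route avoids the heavier machinery (Theorem~\ref{km}, the interval estimate) used in the proof of Theorem~\ref{transgroup}, relying instead only on Lemma~\ref{Hlemma} and the hyperbolicity characterisation of scalar multiples of Pfister forms.
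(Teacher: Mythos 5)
Your proposal is correct and takes essentially the same route as the paper: its proof of $(iii)\Rightarrow(i)$ applies the hypothesis directly to $K=F(q)(\!(x)\!)$, where $q_K$ is isotropic and hence round, so that $x\in G_K(q)$ makes $q\otimes\<1,-x\>$ hyperbolic, and Lemma~\ref{Hlemma} then descends this to hyperbolicity of $q_{F(q)}$ --- exactly your computation. Your intermediate stop at $F(q)(t)$ before passing to the Laurent series field is an immaterial variation (one can invoke $(iii)$ over $F(q)(\!(t)\!)$ itself), and your explicit treatment of $\dim q=1$ handles a case the paper leaves tacit.
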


\begin{proof} Assuming $(i)$, Lemma~\ref{simone} implies that $q_K$ is a Pfister form for $K/F$ such that $1\in D_K(q)$, whereby $(ii)$ follows. As $(ii)$ clearly implies $(iii)$, it suffices to show that $(iii)$ implies $(i)$.


Letting $K = F(q)(\!(x)\!)$, we have that $D_K(q)=K^{\x}=G_K(q)$. As $x\in G_K(q)$, the form $q\otimes\< 1,-x\>$ is hyperbolic over $F(q)(\!(x)\!)$. Invoking Lemma~\ref{Hlemma}, it follows that $q$ is hyperbolic over $F(q)$. Thus, $q$ is similar to a Pfister form over $F$ by \cite[Theorem X.$4.14$]{LAM}.
\end{proof}

We conclude this section by characterising the odd-dimensional round forms (see \cite{E} for Elman's characterisation of odd-dimensional round forms in the situation where isotropic round forms are defined to be hyperbolic).

\begin{prop}\label{rounddim} Let $q$ be a form over $F$. \begin{enumerate}[$(i)$]
\item If $D_F(q)=(F^{\x})^2$, then $q$ is round over $F$.
\smallskip
\item If $H_F(q)\neq (F^{\x})^2$ and $q$ is round over $F$, then $q$ is even-dimensional.
\end{enumerate}
\end{prop}


\begin{proof} $(i)$ If $D_F(q)=(F^{\x})^2$, then $D_F(q)\subseteq G_F(q)$, whereby $q$ is round over $F$.

$(ii)$ Let $a\in H_F(q)\setminus (F^{\x})^2$. As $q$ is round over $F$, we have that $H_F(q)\subseteq G_F(q)$ by Corollary~\ref{genroundchar}, whereby $q\perp -aq$ is hyperbolic over $F$. As a comparison of determinants yields the contradiction that $a\in (F^{\x})^2$ for $q$ odd-dimensional, the result follows.
\end{proof}



%




Adapting Elman's proof of \cite[Lemma]{E}, we obtain the following result as a corollary of Proposition~\ref{rounddim}.

\begin{cor}\label{oddround} Let $q$ be an odd-dimensional form over $F$. If $q$ is round, then $q\simeq (2r+1)\x\< 1\>$ for some $r\in \mathbb{N}\cup\{ 0\}$. Moreover, the following are equivalent:
\begin{enumerate}[$(i)$]
\item $(2k+1)\x\< 1\>$ is round over $F$ for some $k\in\mathbb{N}$,
\smallskip
\item $(2n+1)\x\< 1\>$ is round over $F$ for every $n\in\mathbb{N}\cup\{ 0\}$,
\smallskip
\item $F$ is Pythagorean.
\end{enumerate}
\end{cor}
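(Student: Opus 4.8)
The plan is to extract from the roundness hypothesis the exact value set of $q$, and then to translate each of the three conditions into a statement about representability by sums of squares, at which point the Pythagorean property appears naturally. The one tool doing all the real work is Proposition~\ref{rounddim}.

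First I would prove the opening assertion. Suppose $q$ is odd-dimensional and round over $F$. Since $1=1^2$ is represented, we have $(F^\times)^2\subseteq D_F(q)\subseteq H_F(q)$. Were $H_F(q)\neq(F^\times)^2$, then Proposition~\ref{rounddim}(ii) would force $q$ to be even-dimensional, a contradiction; hence $H_F(q)=(F^\times)^2$, and consequently $D_F(q)=(F^\times)^2$. Writing $q\simeq\langle a_1,\ldots,a_n\rangle$, each $a_i\in D_F(q)=(F^\times)^2$ is a square, so $\langle a_i\rangle\simeq\langle 1\rangle$ and therefore $q\simeq n\times\langle 1\rangle$ with $n=\dim q=2r+1$ odd. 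I would then record the resulting two-sided criterion, valid for any odd-dimensional form: such a form is round over $F$ if and only if its value set equals $(F^\times)^2$ — the forward direction being the computation just made, and the reverse direction being Proposition~\ref{rounddim}(i). Applied to $q=(2n+1)\times\langle 1\rangle$, whose value set consists of the nonzero sums of $2n+1$ squares, this says that $(2n+1)\times\langle 1\rangle$ is round over $F$ precisely when every nonzero sum of $2n+1$ squares in $F$ is a square.

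With this reformulation the equivalences fall out. For (iii)~$\Rightarrow$~(ii): if $F$ is Pythagorean then every sum of squares is a square, so for every $n$ the value set of $(2n+1)\times\langle 1\rangle$ is $(F^\times)^2$, giving roundness (the case $n=0$ being trivial). The implication (ii)~$\Rightarrow$~(i) is immediate on choosing any $k\in\mathbb{N}$. For (i)~$\Rightarrow$~(iii), suppose $(2k+1)\times\langle 1\rangle$ is round for some $k\geq 1$, so that its value set is $(F^\times)^2$; since $2k+1\geq 3$, the form $\langle 1,1\rangle$ is a subform of $(2k+1)\times\langle 1\rangle$, whence $D_F(\langle 1,1\rangle)\subseteq D_F((2k+1)\times\langle 1\rangle)=(F^\times)^2$, and combined with the reverse containment this yields $D_F(\langle 1,1\rangle)=(F^\times)^2$, i.e. every sum of two squares is a square, so $F$ is Pythagorean.

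The crux is the implication (i)~$\Rightarrow$~(iii): the whole point is to descend from sums of $2k+1$ squares down to sums of two squares, and the observation making this routine is simply that $\langle 1,1\rangle$ sits as a subform of $(2k+1)\times\langle 1\rangle$, so its value set is contained in that of the larger form. I expect the only genuine care to be needed in the opening reduction, namely confirming that roundness of $q$ yields $H_F(q)=(F^\times)^2$ \emph{uniformly}, without a separate treatment of the isotropic case; this is precisely what Proposition~\ref{rounddim}(ii) delivers, as it carries no anisotropy hypothesis.
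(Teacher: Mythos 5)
Your proof is correct and follows essentially the same route as the paper, which derives the corollary from Proposition~\ref{rounddim} by adapting Elman's original argument: roundness forces $H_F(q)=D_F(q)=(F^{\x})^2$ via Proposition~\ref{rounddim}~$(ii)$, the diagonal entries are then squares, and the equivalence with the Pythagorean property follows from the value set of $(2n+1)\x\< 1\>$ being the nonzero sums of squares, with Proposition~\ref{rounddim}~$(i)$ giving the converse direction. Your observation that $\< 1,1\>\subseteq (2k+1)\x\< 1\>$ handles $(i)\Rightarrow(iii)$ exactly as in Elman's proof, and you correctly note that no anisotropy hypothesis is needed.
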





\begin{cor}\label{oddisoround}
Let $q$ be an odd-dimensional isotropic form over $F$. Then $q$ is round over $F$ if and only if $F$ is quadratically closed.
\end{cor}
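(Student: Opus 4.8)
The plan is to deduce the statement almost immediately from Proposition~\ref{rounddim}, after recording the values of $D_F(q)$ and $H_F(q)$ in the isotropic case. Recall that $F$ is quadratically closed precisely when $F^{\x}=(F^{\x})^2$, so the real task is to show that, for $q$ odd-dimensional and isotropic, roundness of $q$ is equivalent to the equality $F^{\x}=(F^{\x})^2$.

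For the easier implication, I would assume $F$ is quadratically closed. Since $q$ is isotropic we have $D_F(q)=F^{\x}$, and the hypothesis then gives $D_F(q)=F^{\x}=(F^{\x})^2$. Proposition~\ref{rounddim}~$(i)$ applies verbatim to conclude that $q$ is round over $F$; note that odd-dimensionality plays no role in this direction.

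For the converse, I would suppose $q$ is round and argue by contraposition against Proposition~\ref{rounddim}~$(ii)$. The key preliminary observation is that, $q$ being isotropic, one has $H_F(q)=D_F(q)D_F(q)=F^{\x}$, using $D_F(q)=F^{\x}$ together with the description $H_F(q)=D_F(q)D_F(q)$ furnished by Lemma~\ref{R}. Now if $F$ were not quadratically closed, then $H_F(q)=F^{\x}\neq(F^{\x})^2$; as $q$ is round, Proposition~\ref{rounddim}~$(ii)$ would force $q$ to be even-dimensional, contradicting the hypothesis. Hence $F^{\x}=(F^{\x})^2$, i.e.\ $F$ is quadratically closed.

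I do not anticipate a genuine obstacle here: the substance is already packaged in Proposition~\ref{rounddim}, whose part $(ii)$ performs the determinant comparison linking odd-dimensionality to the square-class structure. The only point requiring any care is the identification $H_F(q)=F^{\x}$ for isotropic $q$. Should one prefer to avoid invoking Proposition~\ref{rounddim}, the same conclusion follows directly: roundness and isotropy give $G_F(q)=D_F(q)=F^{\x}$, so for every $a\in F^{\x}$ the isometry $aq\simeq q$ yields $a^{\dim q}\equiv 1 \pmod{(F^{\x})^2}$, and the oddness of $\dim q$ then forces $a\in(F^{\x})^2$.
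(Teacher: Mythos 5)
Your proposal is correct and takes essentially the same route as the paper: both directions rest on Proposition~\ref{rounddim} combined with the observation that $D_F(q)=F^{\times}=H_F(q)$ for isotropic $q$. The only cosmetic differences are that the paper deduces $D_F(q)=(F^{\times})^2$ directly from Proposition~\ref{rounddim} before invoking isotropy, and for the converse argues via $q_{\mathrm{an}}\simeq\langle 1\rangle$ rather than citing part~$(i)$, while your inlined determinant argument simply reproduces the proof of part~$(ii)$.
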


\begin{proof} If $q$ is round, then $D_F(q)=(F^{\x})^2$ by Proposition~\ref{rounddim}. As $q$ is isotropic, it follows that $D_F(q)=F^{\x}$, whereby $F$ is quadratically closed.


If $F$ is quadratically closed, then $q_{\text{an}}\simeq\< 1\>$, whereby $q$ is round.
\end{proof}




\begin{cor}\label{oddroundcor} If $q$ is an odd-dimensional anisotropic round form over $F$, then $q\otimes \beta$ is anisotropic over $F$ for every anisotropic $2$-dimensional form $\beta$ over $F$. \end{cor}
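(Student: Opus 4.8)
The plan is to reduce the statement to a single determinant computation by combining the two preparatory results already available. Since $q$ is round, it represents one, so $1\in D_F(q)$ and Corollary~\ref{roundchar} applies: by part $(iv)$, the form $q\otimes\beta$ is either anisotropic or hyperbolic for \emph{every} $2$-dimensional form $\beta$ over $F$. Thus it suffices to show that the hyperbolic case cannot occur when $\beta$ is anisotropic. To make the bookkeeping transparent, I would first invoke Corollary~\ref{oddround} to write $q\simeq (2r+1)\x\<1\>$ for some $r\in\mathbb{N}\cup\{0\}$, so that $q\otimes\beta\simeq (2r+1)\x\beta$, a form of dimension $2(2r+1)=4r+2$.

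The key step is to rule out hyperbolicity via determinants, mirroring the argument already used in the proof of Proposition~\ref{rounddim}~$(ii)$. Suppose, for contradiction, that $q\otimes\beta$ is hyperbolic. A hyperbolic form of dimension $4r+2$ is isometric to $(2r+1)\x\<1,-1\>$, whose determinant is $(-1)^{2r+1}=-1$ modulo squares. On the other hand, the determinant of $(2r+1)\x\beta$ equals $(\det\beta)^{2r+1}$, which is congruent to $\det\beta$ modulo squares because the exponent $2r+1$ is odd. Comparing determinants therefore forces $\det\beta\equiv -1$ in $F^{\x}/(F^{\x})^2$, which means precisely that $\beta$ is isotropic. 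This contradicts the hypothesis that $\beta$ is anisotropic, so $q\otimes\beta$ must be anisotropic.

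I expect no genuine obstacle here; the whole content is the parity observation that, $\dim q$ being odd, the determinant of $q\otimes\beta$ collapses to $\det\beta$ while the competing hyperbolic determinant is $-1$ rather than $+1$. The only point meriting care is this determinant bookkeeping in $F^{\x}/(F^{\x})^2$. (If one prefers to bypass Corollary~\ref{oddround}, the same conclusion follows directly from the tensor-product determinant identity $\det(q\otimes\beta)\equiv(\det q)^{2}(\det\beta)^{\dim q}\equiv\det\beta$, valid since $\dim q$ is odd, together with Corollary~\ref{roundchar}~$(iv)$.)
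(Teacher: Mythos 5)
Your proof is correct, but it takes a genuinely different route from the paper. The paper argues as follows: if $q\otimes\beta$ were isotropic with $\beta\simeq b\langle 1,-a\rangle$, then $a\in H_F(q)\subseteq G_F(q)$ by Corollary~\ref{genroundchar}, so $q\otimes\langle 1,-a\rangle$ is hyperbolic; it then invokes the Elman--Lam result \cite[Proposition 2.2]{EL2} repeatedly to decompose $q\simeq\beta_1\perp\ldots\perp\beta_n$ into binary forms with each $\beta_i\otimes\langle 1,-a\rangle$ hyperbolic, forcing $\dim q$ even --- a contradiction. You instead pass through the dichotomy of Corollary~\ref{roundchar}~$(iv)$ (legitimately, since a round form satisfies $1\in G_F(q)=D_F(q)$) and rule out the hyperbolic branch by the determinant identity $\det(q\otimes\beta)\equiv(\det q)^2(\det\beta)^{\dim q}\equiv\det\beta$ versus $\det\bigl((\dim q)\times\langle 1,-1\rangle\bigr)\equiv -1$ modulo squares, together with the standard fact that a binary form is isotropic exactly when its determinant is $-1$ modulo squares; all these steps check out, and your detour through Corollary~\ref{oddround} is, as you note, dispensable. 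Your argument is the more elementary one: it extends the determinant-parity trick the paper itself uses in Proposition~\ref{rounddim}~$(ii)$, and avoids the external structural result from \cite{EL2}. Indeed, it can be compressed further: by (the contrapositive of) Proposition~\ref{rounddim}~$(ii)$, an odd-dimensional round form has $H_F(q)=(F^{\times})^2$, and isotropy of $q\otimes\beta$ with $\beta\simeq b\langle 1,-a\rangle$ means precisely $a\in H_F(q)$, whence $a$ is a square and $\beta$ is isotropic --- no hyperbolicity discussion is needed. What the paper's route buys is structural information: the binary decomposition explains \emph{how} hyperbolicity of $q\otimes\langle 1,-a\rangle$ constrains $q$, which connects with the alternative explanation in Remark~\ref{almosttrivial} via torsion-free Witt rings.
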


\begin{proof} Let $\beta\simeq b\< 1,-a\>$ be anisotropic over $F$ for $a,b\in F^{\x}$. Suppose, for the sake of contradiction, that $q\otimes \beta$ is isotropic over $F$. Hence, $q\otimes \< 1,-a\>$ is hyperbolic over $F$ by Corollary~\ref{genroundchar}. By repeatedly invoking \cite[Proposition 2.2]{EL2}, it follows that there exist binary forms $\beta_1,\ldots ,\beta_n$ over $F$ such that $\beta_i\otimes \< 1,-a\>$ is hyperbolic over $F$ for $i=1,\ldots ,n$ and such that $q\simeq\beta_1\perp\ldots\perp \beta_n$, whereby $q$ is even-dimensional, a contradiction.
\end{proof}

\begin{remark}\label{almosttrivial} The preceding result can also be derived from the fact that $F$ is Pythagorean and real, whereby its Witt ring is torsion free (see \cite[Theorem $23.2$]{EKM}).
\end{remark}


\section{Group and round forms over field extensions}


In \cite{A}, Alpers considers roundness with respect to algebraic extensions, establishing ``going-down'' and ``going-up'' results in certain situations. In particular, he remarks that a general going-down result holds for odd-degree extensions by Springer's theorem \cite{Sp} (see \cite[Theorem VII.$2.7$]{LAM}). We generalise this remark below.

\begin{prop}\label{Agen} Let $q$ be a form over $F$ and let $K$ be an extension of $F$.\begin{enumerate}[$(i)$]
\item Suppose that $D_K(q)\cap F^{\x}\subseteq D_F(q)$. If $q_K$ is a group form, then $q$ is a group form over $F$. 
\smallskip
\item Suppose that every anisotropic form over $F$ of dimension at most $\dim q +1$ is anisotropic over $K$. If $q_K$ is a round form, then $q$ is a round form over $F$. 
\end{enumerate}
\end{prop}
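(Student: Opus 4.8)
The first statement is immediate from our characterisation of group forms. Under the hypotheses of $(i)$, I would take $a \in H_F(q)$, so that $\langle 1,-a\rangle \otimes q$ is isotropic over $F$ and therefore over $K$, giving $a \in H_K(q)$. As $q_K$ is a group form, Proposition~\ref{group1} yields $a \in H_K(q) \subseteq D_K(q)$, whence $a \in D_K(q) \cap F^{\x} \subseteq D_F(q)$. This shows $H_F(q) \subseteq D_F(q)$, and another application of Proposition~\ref{group1} gives that $q$ is a group form over $F$.

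For $(ii)$, Corollary~\ref{genroundchar} reduces the task to proving that $1 \in D_F(q)$ and that $H_F(q) \subseteq G_F(q)$, and the engine throughout will be a single consequence of the dimension hypothesis that I would record first: \emph{if $\phi$ is anisotropic over $F$ with $\dim \phi \leqs \dim q$ and $b \in D_K(\phi) \cap F^{\x}$, then $b \in D_F(\phi)$.} Indeed, $\phi \perp \langle -b\rangle$ has dimension at most $\dim q + 1$, so were it anisotropic over $F$ it would remain anisotropic over $K$, contradicting $b \in D_K(\phi)$; hence it is isotropic over $F$, and since $\phi$ is anisotropic, $b \in D_F(\phi)$. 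Applied with $\phi = q$ (and trivially if $q$ is isotropic, as then $D_F(q) = F^{\x}$), this already yields $1 \in D_F(q)$.

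Next I would upgrade this to a \emph{descent of isometry}: if $\phi_1, \phi_2$ are anisotropic over $F$ of common dimension $d \leqs \dim q$ with $(\phi_1)_K \simeq (\phi_2)_K$, then $\phi_1 \simeq \phi_2$ over $F$. This follows by induction on $d$, the case $d = 0$ being trivial: choosing $c \in D_F(\phi_1)$, which lies in $D_K(\phi_1) = D_K(\phi_2)$, the represented-values step applied to $\phi_2$ gives $c \in D_F(\phi_2)$; splitting $\langle c\rangle$ off each form and cancelling over $K$ (Witt cancellation) reduces to a pair of anisotropic forms of dimension $d - 1 \leqs \dim q$, to which the inductive hypothesis applies.

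To conclude, take $a \in H_F(q)$; then $\langle 1,-a\rangle \otimes q$ is isotropic over $K$, so $a \in H_K(q) \subseteq G_K(q)$ as $q_K$ is round, i.e. $q_K \simeq a q_K$. Here the hypothesis is essential: $q_{\mathrm{an}}$ is anisotropic over $F$ of dimension at most $\dim q$, so it stays anisotropic over $K$ and is therefore the anisotropic kernel of $q_K$; the isometry $q_K \simeq a q_K$ thus forces $(q_{\mathrm{an}})_K \simeq a (q_{\mathrm{an}})_K$, and the descent of isometry gives $q_{\mathrm{an}} \simeq a q_{\mathrm{an}}$ over $F$, that is, $a \in G_F(q_{\mathrm{an}}) = G_F(q)$. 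Together with $1 \in D_F(q)$, Corollary~\ref{genroundchar} then gives that $q$ is round over $F$. The main obstacle — and the reason the bound $\dim q + 1$ suffices — is that one cannot descend the hyperbolicity of $\langle 1,-a\rangle \otimes q$ directly, since that form has dimension $2\dim q$, far beyond the reach of the hypothesis; rerouting instead through the isometry of the anisotropic kernel keeps every form in play of dimension at most $\dim q + 1$, which is exactly what the hypothesis controls.
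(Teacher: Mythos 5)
Your proposal is correct and takes essentially the same route as the paper, whose proof establishes $(i)$ via the equality $D_F(q)=D_K(q)\cap F^{\x}$ and $(ii)$ via the two descents $D_K(q)\cap F^{\x}=D_F(q)$ and $G_K(q)\cap F^{\x}=G_F(q)$ (applied to $q_{\mathrm{an}}$ when $q$ is isotropic), both asserted to follow ``readily'' from the dimension hypothesis. Your inductive descent-of-isometry lemma is exactly the content behind the paper's unproven claim that $G_K(q)\cap F^{\x}=G_F(q)$, so your write-up is a correct, fully detailed version of the paper's terse argument.
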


\begin{proof} $(i)$ As $D_K(q)\cap F^{\x}=D_F(q)$ follows from the assumption, if $D_K(q)$ is a group it readily follows that $D_F(q)$ is a group.

$(ii)$ If $q$ is anisotropic over $F$, the assumption on $K$ readily implies that $D_K(q)\cap F^{\x}=D_F(q)$ and $G_K(q)\cap F^{\x}=G_F(q)$, whereby the result follows. Applying this argument to $q_{\text{an}}$ in the case where $q$ is isotropic over $F$, the result follows.
\end{proof}


Thus, as a consequence of the above, group and round forms satisfy going-down results with respect to purely-transcendental extensions. Per Remark~\ref{Pfistergoingdown}, going-up results do not hold for group or round forms with respect to rational function fields. However, we do have the following result with respect to Laurent series fields:













%


\begin{prop}\label{laurent} Let $q$ be a form over $F$ and let $K=F(\!(x)\!)$.  
\begin{enumerate}[$(i)$]
\item $q$ is a group form over $F$ if and only if $q$ is a group form over $K$.
\smallskip
\item If $q$ is anisotropic, then $q$ is round over $F$ if and only if $q$ is round over $K$.
\end{enumerate}
\end{prop}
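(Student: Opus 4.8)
The plan is to exploit the structure theory of forms over $K=F(\!(x)\!)$, where every form decomposes as $p\perp xq$ for forms $p,q$ over $F$, and where isotropy is governed by Lemma~\ref{Hlemma}. The key preliminary computation is to describe $D_K(q)$, $H_K(q)$ and $G_K(q)$ for a form $q$ defined over $F$ (so with trivial $x$-part). I would first show that $D_K(q)\cap F^{\x}=D_F(q)$: an element $a\in F^{\x}$ lies in $D_K(q)$ iff $q\perp\<-a\>$ is isotropic over $K$, and since this is a form defined over $F$ (no $x$), Lemma~\ref{Hlemma} forces its isotropy over $K$ to coincide with isotropy over $F$. The same reasoning gives $H_K(q)\cap F^{\x}=H_F(q)$ and, using the hyperbolicity criterion for $G$, that $G_K(q)\cap F^{\x}=G_F(q)$. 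These three facts are exactly the hypotheses of Proposition~\ref{Agen}, so the going-down directions in both $(i)$ and $(ii)$ follow immediately from that proposition (noting that for anisotropic $q$ over $F$, Lemma~\ref{Hlemma} guarantees forms of dimension up to $\dim q+1$ stay anisotropic over $K$).

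The substance is therefore the going-up direction: assuming $q$ is group (resp.\ round) over $F$, deduce the same over $K$. First I would settle the isotropic case of $(i)$ trivially, since an isotropic form is automatically group. For the anisotropic case, the natural strategy is to take an arbitrary square class of $K$, represented either by $a$ or by $ax$ with $a\in F^{\x}$, and determine precisely which such classes lie in $D_K(q)$, then check closure under multiplication. Using the decomposition, $a\in D_K(q)$ iff $a\in D_F(q)$ by the computation above, and $ax\in D_K(q)$ iff $q\perp\<-ax\>\simeq(q\perp\langle\emptyset\rangle)\perp x\langle-a\rangle$ is isotropic over $K$, which by Lemma~\ref{Hlemma} happens iff $q$ is isotropic over $F$ (impossible here, as $q$ is anisotropic) or $\<-a\>$ is isotropic (impossible). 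Hence for anisotropic $q$ no class of the form $ax$ is represented, so $D_K(q)=D_F(q)\cdot(K^{\x})^2$, which is a group over $K$ precisely because $D_F(q)$ is a group over $F$. This proves $(i)$.

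For the round direction of $(ii)$, I would combine the above value-set description with a parallel computation of $G_K(q)$. Using the hyperbolicity formulation $G_K(q)=\{a\mid \<1,-a\>\otimes q\text{ hyperbolic over }K\}$ and applying Lemma~\ref{Hlemma} to $\<1,-a\>\otimes q$ (an $F$-form) and to $\<1,-ax\>\otimes q\simeq(q\perp -ax\,q)$, I expect to find that, for anisotropic $q$, only square classes coming from $F$ can be similarity factors, giving $G_K(q)\cap F^{\x}=G_F(q)$ and no genuinely new $x$-classes. Granting this, roundness of $q$ over $F$ (equivalently $D_F(q)=G_F(q)$) transports directly to $D_K(q)=G_K(q)$ via the matching descriptions, yielding roundness over $K$. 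Alternatively, and perhaps more cleanly, one can invoke Corollary~\ref{genroundchar}: since $1\in D_F(q)\subseteq D_K(q)$, it suffices to verify $H_K(q)\subseteq G_K(q)$, and the same $a$-versus-$ax$ case analysis reduces this to the inclusion $H_F(q)\subseteq G_F(q)$ already guaranteed by the roundness of $q$ over $F$.

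The main obstacle I anticipate is the careful bookkeeping in the $G_K(q)$ computation for the round going-up direction: determining the similarity factors of $q_K$ requires analysing when $\<1,-ax\>\otimes q$ is hyperbolic over $K$, and one must verify via Lemma~\ref{Hlemma} (applied to the $x$-decomposition $q\perp x(-aq)$) that such mixed classes contribute nothing new beyond $(K^{\x})^2$ when $q$ is anisotropic over $F$. Once this structural description of $D_K(q)$, $H_K(q)$ and $G_K(q)$ is pinned down, both equivalences fall out essentially formally, with Proposition~\ref{Agen} supplying the converse directions.
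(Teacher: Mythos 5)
Your proposal is correct and takes essentially the same route as the paper: both arguments rest on Lemma~\ref{Hlemma} combined with the $a$-versus-$ax$ square-class analysis over $K=F(\!(x)\!)$ for the going-up directions (showing no mixed classes $ax$ enter $H_K(q)$, $D_K(q)$ or $G_K(q)$ when $q$ is anisotropic), and on Proposition~\ref{Agen} for the going-down directions. Your explicit descriptions $D_K(q)=D_F(q)\cdot(K^{\x})^2$ and $G_K(q)=G_F(q)\cdot(K^{\x})^2$ are just a more structural phrasing of the paper's argument via $H_K(q)$, Proposition~\ref{group1} and Corollary~\ref{genroundchar}, and your suggested alternative for $(ii)$ via Corollary~\ref{genroundchar} is precisely what the paper does.
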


\begin{proof} We remark that anisotropic forms over $F$ are anisotropic over $K$.

$(i)$ As isotropic forms are trivially group, we may assume, without loss of generality, that $q$ is anisotropic over $F$. We consider the set $H_K(q)$, recalling that every non-zero square class in $K$ can be represented by $a$ or $ax$ for some $a\in F^{\x}$. Invoking Lemma~\ref{Hlemma}, it is apparent that $q\perp -axq$ is anisotropic over $K$ for $a\in F^{\x}$. For $a\in F^{\x}$ such that $q\perp -aq$ is isotropic over $K$, it follows that $q\perp -aq$ is isotropic over $F$, whereby $q\perp\< -a\>$ is isotropic over $F$ by Corollary~\ref{group1}. Thus, as $q\perp\< -a\>$ is isotropic over $K$, it follows from Corollary~\ref{group1} that $q$ is a group form over $K$.

For the converse statement, we may invoke Proposition~\ref{Agen}~$(i)$.

$(ii)$ As $1\in D_F(q)$ if and only if $1\in D_K(q)$, we may argue as in the preceding proof of $(i)$, with respect to Corollary~\ref{genroundchar} as opposed to Corollary~\ref{group1}, to establish the ``only if'' statement. The ``if'' statement can be established by invoking Proposition~\ref{Agen}~$(ii)$.
\end{proof}



\begin{remark}\label{anisround} We note the necessity of the restriction to anisotropic round forms in Statement $(ii)$ of the above result. If $q$ is isotropic and round over $K=F(\!(x)\!)$, then it readily follows that $q$ is isotropic and round over $F$, but the converse does not hold in general. In particular, the isotropic form $q\simeq\< 1,-1,1,1\>$ is round over $\mathbb{F}_3$ but it is not round over $\mathbb{F}_3(\!(x)\!)$, as $x\notin D_K\left(\left( q_K\right)_{\text{an}}\right)$ in accordance with Lemma~\ref{Hlemma}.
\end{remark}

Iterating the above, we obtain the following result.

\begin{cor}\label{laurentcor} Let $q$ be a form over $F$ and let $K=F(\!(x_1)\!)\ldots (\!(x_n)\!)$ for $n\in\mathbb{N}$.
\begin{enumerate}[$(i)$]
\item $q$ is a group form over $F$ if and only if $q$ is a group form over $K$.
\smallskip
\item If $q$ is anisotropic, then $q$ is round over $F$ if and only if $q$ is round over $K$.
\end{enumerate}
\end{cor}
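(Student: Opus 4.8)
The plan is to derive Corollary~\ref{laurentcor} from Proposition~\ref{laurent} by a straightforward induction on $n$, the number of iterated Laurent series variables. The base case $n=1$ is precisely Proposition~\ref{laurent}, so the essential work is organising the inductive step so that the hypotheses of Proposition~\ref{laurent} are met at each stage.

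First I would set $F_0=F$ and $F_j=F_{j-1}(\!(x_j)\!)$ for $1\leqs j\leqs n$, so that $F_n=K$ and each $F_j=F_{j-1}(\!(x_j)\!)$ is a single Laurent series extension of its predecessor. For part $(i)$, the inductive hypothesis is that $q$ is a group form over $F$ if and only if $q$ is a group form over $F_{j-1}$; applying Proposition~\ref{laurent}~$(i)$ to the field $F_{j-1}$ in place of $F$ gives that $q$ is a group form over $F_{j-1}$ if and only if $q$ is a group form over $F_{j-1}(\!(x_j)\!)=F_j$, and composing these two equivalences advances the induction to $j$. Iterating up to $j=n$ yields the claim for $K$.

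For part $(ii)$ the same induction applies, but here one must carry the anisotropy hypothesis along. Since $q$ is assumed anisotropic over $F$, and anisotropic forms remain anisotropic over a Laurent series extension (as noted at the start of the proof of Proposition~\ref{laurent}), an immediate induction shows $q$ is anisotropic over every $F_j$. This is exactly what licenses the application of Proposition~\ref{laurent}~$(ii)$ at each step, with $F_{j-1}$ playing the role of the base field: $q$ is round over $F_{j-1}$ if and only if $q$ is round over $F_j$. Composing with the inductive hypothesis advances the equivalence, and iterating gives roundness over $F$ if and only if over $K$.

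I do not anticipate a genuine obstacle here, as the result is a clean iteration of an already-established single-step theorem; the only point requiring care is the bookkeeping of the anisotropy condition in part $(ii)$, ensuring that the hypothesis ``$q$ is anisotropic'' is verified over each intermediate field $F_{j-1}$ before invoking Proposition~\ref{laurent}~$(ii)$. Since that verification is itself an immediate induction using the preservation of anisotropy under Laurent series extension, the argument is purely formal.
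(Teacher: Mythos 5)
Your proposal is correct and matches the paper exactly: the paper derives Corollary~\ref{laurentcor} simply by iterating Proposition~\ref{laurent} (``Iterating the above, we obtain the following result''), which is precisely your induction on $n$, with the preservation of anisotropy under Laurent series extension justifying each application of part~$(ii)$.
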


%


\begin{remark}\label{divergence} We note that the above result demonstrates a divergence in the behaviour, with respect to the properties of being group or round, of forms over $F$ extended to iterated Laurent series fields as opposed to rational function fields. Corollary~\ref{laurentcor} contrasts with Pfister's result, per Remark~\ref{Pfistergoingdown}, that an anisotropic form of dimension $n$ over $F$ is a round form over $F\left(x_1,\ldots ,x_{n}\right)$ if and only if it is a group form over $F\left(x_1,\ldots ,x_{2n}\right)$ if and only if it is a Pfister form over $F$. 
\end{remark}








Motivated by the problem of distinguishing between anisotropic group and round forms, as studied over particular fields in \cite{HJ1} and \cite{HJ2}, the rest of this section is devoted to addressing Question~\ref{groupnotround}.







In accordance with Proposition~\ref{roundchar}, if $q$ is an anisotropic group form over $F$, one can resolve Question~\ref{groupnotround} by determining whether there exists a Pfister form $\pi$ over $F$ such that $q\otimes \pi$ is isotropic but not hyperbolic. If such a form $q$ is odd-dimensional, Question~\ref{groupnotround} further reduces to the problem of determining whether $H_F(q)\setminus (F^{\x})^2$ is empty, in accordance with Proposition~\ref{rounddim}.






The natural approach towards answering Question~\ref{groupnotround} in the case where $q$ is not a group form over $F$ is to consider its extension to the generic extension $K/F$ such that $q_K$ is a group form. However, one encounters the following obstruction:

\begin{prop}\label{isogroup} Let $q$ be an anisotropic form over $F$. If there exists $a\in H_F(q)$ such that $i_1(q\perp\< -a\>)>1$, then there does not exist an extension $K/F$ such that $q_K$ is an anisotropic group form.
\end{prop}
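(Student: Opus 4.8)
The plan is to argue by contradiction: suppose $K/F$ is an extension over which $q_K$ becomes an anisotropic group form, and derive a contradiction from the hypothesis that some $a\in H_F(q)$ satisfies $i_1(q\perp\<-a\>)>1$. The intuition is that $i_1(q\perp\<-a\>)>1$ records that, upon passing to the function field $F(q\perp\<-a\>)$, the form $q\perp\<-a\>$ develops more than one hyperbolic plane, which is more isotropy than can be absorbed if $q_K$ is to remain anisotropic while representing $a$ (which it must, being a group form containing $1$ and having $a\in H_K(q)$). I would try to play the Karpenko--Merkurjev machinery (Theorem~\ref{km}) and Hoffmann's Separation Theorem (Theorem~\ref{H95}) against the dimension constraints to show that no such $K$ can reconcile anisotropy of $q_K$ with the forced isotropy behaviour of $q\perp\<-a\>$.

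First I would observe that since $a\in H_F(q)=D_F(q)D_F(q)$ and $q_K$ is an anisotropic group form with $1\in D_K(q)$, we have $a\in H_K(q)\subseteq D_K(q)$ by Proposition~\ref{group1}, so $q_K$ represents $a$ and hence $q\perp\<-a\>$ is isotropic over $K$, with $i(q_K\perp\<-a\>)=1$ exactly (the $-a$ must be the vector pairing with a representation of $a$, and $q_K$ itself stays anisotropic). Meanwhile, over $F$ the form $p:=q\perp\<-a\>$ is isotropic (as $a\in H_F(q)$), so its anisotropic part $p_{\mathrm{an}}$ is a proper subform, and $i_1(p)>1$ says that over $F(p)$ the Witt index jumps by more than one. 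The strategy is to compare $i(p_K)=1$ against the lower bound $i_1(p)\leqslant i(p_K)$ furnished by Knebusch's specialisation results (recalled in the excerpt, giving $i_1(p)\leqslant i(p_K)$ for every extension $K$ over which $p$ is isotropic). Since $p_K$ is isotropic, this yields $1<i_1(p)\leqslant i(p_K)=1$, an immediate contradiction.

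The delicate point, and the one I expect to be the main obstacle, is justifying that $i(q_K\perp\<-a\>)=1$ rather than merely $\geqslant 1$; this is where the assumption that $q_K$ is \emph{anisotropic} is essential. If $q_K$ is anisotropic, then in the decomposition of $q_K\perp\<-a\>$ the anisotropic part has dimension at least $\dim q-1$, forcing the Witt index to be at most $1$, and isotropy forces it to be exactly $1$. I would need to verify that $a$ lies in $D_K(q)$ as a genuine represented value in $K^{\times}$ (not merely up to the square-class bookkeeping of $H_K(q)$), which follows cleanly from Proposition~\ref{group1} applied over $K$ together with $1\in D_K(q)$. Once $i(q_K\perp\<-a\>)=1$ is pinned down, the inequality $i_1(p)\leqslant i(p_K)$ closes the argument; the only care required is to ensure the hypothesis $i_1(p)>1$ is invoked with $p=q\perp\<-a\>$ anisotropic-part data over $F$ matching the extension $K$, which the Knebusch specialisation inequality handles uniformly for all $K$ making $p$ isotropic.
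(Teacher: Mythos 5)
Your proof is correct and is essentially the paper's own argument: the paper runs it in the contrapositive (the group property over $K$ gives $a\in H_K(q)=D_K(q)$, so $q\perp\langle -a\rangle$ is isotropic over $K$, and then Knebusch's inequality $i_1(q\perp\langle -a\rangle)\leqslant i\bigl((q\perp\langle -a\rangle)_K\bigr)$ forces the Witt index over $K$ to exceed $1$, whence $q_K$ is isotropic by \cite[Exercise I.$16$]{LAM}), while you phrase the identical chain as a contradiction, pinning down $i\bigl((q\perp\langle -a\rangle)_K\bigr)=1$ from the anisotropy of $q_K$. These are the same proof, merely organised in opposite directions.

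One side remark in your write-up is false, though harmlessly so: $a\in H_F(q)$ does \emph{not} make $p=q\perp\langle -a\rangle$ isotropic over $F$. It makes $q\perp -aq$ isotropic, i.e.\ $a\in D_F(q)D_F(q)$, and $a\in D_F(q)$ only follows when $q$ is a group form over the base field, which is precisely not assumed over $F$ here; indeed, in the paper's Example~\ref{groupiso} the form $q\perp\langle -a\rangle$ is anisotropic over $F$, and the hypothesis $i_1(p)>1$ is only natural in that case. Fortunately this claim is inert: your contradiction uses only the isotropy of $p$ over $K$, which you derive correctly from the group property. Similarly, the anticipated appeal to Theorem~\ref{km} and Theorem~\ref{H95} in your opening paragraph never materialises and is not needed — Knebusch's specialisation inequality alone closes the argument, exactly as in the paper.
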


\begin{proof} Let $K/F$ be an extension such that $q_K$ is a group form. Since $a\in H_F(q)\subseteq H_K(q)$, it follows that $a\in D_K(q)$ by Corollary~\ref{group2}, whereby $q\perp\< -a\>$ is isotropic over $K$. Since $i_1(q\perp\< -a\>)>1$, it follows that $i((q\perp\< -a\>)_K)>1$, whereby $q_K$ is isotropic (see \cite[Exercise I.$16$]{LAM}).
\end{proof}


The following example illustrates that, provided that $\dim q\neq 2^n$ for $n\in\mathbb{N}\cup \{ 0\}$, there exist fields $F$, forms $q$ over $F$ and scalars $a\in F^{\x}$ that satisfy the hypotheses of Proposition~\ref{isogroup}.

 

\begin{example}\label{groupiso} Let $L$ a field and $a\in L^{\x}$ be such that $q\perp\< -a\>$ is an anisotropic Pfister neighbour, where $\dim q\neq 2^n$ for $n\in\mathbb{N}\cup \{ 0\}$. Letting $F=L(q\perp -aq)$, we have that $a\in H_F(q)$ and that $q\perp\< -a\>$ is anisotropic over $F$, by Theorem~\ref{H95}. As $q\perp\< -a\>$ is a Pfister neighbour of dimension $\neq 2^n+1$ for $n\in\mathbb{N}\cup \{ 0\}$, it follows that $i_1(q\perp\< -a\>)>1$.
\end{example}

In contrast with the above example, letting $q$ be an arbitrary anisotropic form over $F$ of dimension $2^n$ for some $n\in\mathbb{N}$, the proof of the following theorem demonstrates that there does exist an extension $K/F$ such that $q_K$ is an anisotropic group form. Moreover, when possible, we can find an extension $K/F$ such that $q_K$ is an anisotropic group form that is not round.




\begin{theorem}\label{gnr} Let $q$ be an anisotropic form over $F$ of dimension $2^n$ for $n\in\mathbb{N}$.
\begin{enumerate}[$(i)$]
\item There exists an extension $K/F$ such that $q_K$ is an anisotropic group form.
\smallskip
\item If $q$ is similar to a Pfister form over $F$, then $q_K$ is round for every extension $K/F$ such that $q_K$ is a group form.
\smallskip
\item If $q$ is not similar to a Pfister form over $F$, there exists an extension $K/F$ such that $q_K$ is an anisotropic group form that is not round.
\end{enumerate}
\end{theorem}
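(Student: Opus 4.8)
The plan is to dispose of the two easy statements first and then to build a single, carefully controlled tower of function fields for $(iii)$. For $(ii)$, if $q$ is similar to a Pfister form over $F$ and $K/F$ is any extension with $q_K$ a group form, then $q_K$ represents $1$ (group forms contain $1\in D_K(q)$), while $q_K$ remains similar to a Pfister form over $K$; Lemma~\ref{simone} then forces $q_K$ to be a Pfister form, hence round. For $(i)$, I would run a Merkurjev-style tower, at each stage adjoining the function fields $F'(q\perp\<-b\>)$ for all $b\in H_{F'}(q)$ with $q\perp\<-b\>$ anisotropic, and passing to the directed union $K$. Each adjoined form has dimension $2^n+1$, so Hoffmann's Separation Theorem (Theorem~\ref{H95}, with $\dim q=2^n\leqs 2^n<2^n+1$) keeps $q$ anisotropic at every step and in the limit; moreover every $a\in H_K(q)$ already lies in some $H_{F'}(q)$, whence $a\in D_K(q)$ by Proposition~\ref{group1}, so $q_K$ is an anisotropic group form.

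For $(iii)$ the idea is to manufacture, over a Laurent series field, a distinguished scalar witnessing non-roundness and then to group-complete without destroying it. Set $\phi=q\otimes\<1,-x\>$ over $F(\!(x)\!)$, which is anisotropic by Lemma~\ref{Hlemma}. The crucial structural input is that, since $q$ is not similar to a Pfister form over $F$, the form $\phi$ is not similar to a Pfister form over $F(\!(x)\!)$: comparing first and second residue forms of $\phi=q\perp x(-q)$, and using that every Pfister form over $F(\!(x)\!)$ is isometric either to one defined over $F$ or to $\tau\otimes\<1,-x\>$ for a Pfister form $\tau$ over $F$, an isometry $\phi\simeq c\cdot\rho$ with $\rho$ Pfister would force $q\simeq c\tau$. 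I would then descend the generic splitting tower of $\phi$, adjoining the function field of its current anisotropic part as long as that dimension exceeds $2^n$, and stop at the first field $M$ with $\dim\phi_{\mathrm{an}}\leqs 2^n$. Throughout this descent every adjoined form has dimension $>2^n$, so Theorem~\ref{H95} keeps $q$ anisotropic over $M$; and because the only $2$-power in the interval $(2^n,2^{n+1}]$ is $2^{n+1}$ while $\phi$ itself is not similar to a Pfister form, the tower cannot pass directly from dimension $>2^n$ to $0$. Hence $\phi_{\mathrm{an}}$ has even dimension with $0<\dim\phi_{\mathrm{an}}\leqs 2^n$ over $M$, so $\phi$ is isotropic but not hyperbolic, i.e. $x\in H_M(q)\setminus G_M(q)$.

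I would then group-complete over $M$ exactly as in $(i)$, adjoining function fields of the forms $q\perp\<-b\>$ of dimension $2^n+1$. Two things must hold in the limit field $K$. First, $q_K$ is an anisotropic group form, by the same Hoffmann and Proposition~\ref{group1} argument as in $(i)$. Second, and this is the heart of the matter, the witness survives, i.e. $\phi_{\mathrm{an}}$ stays anisotropic: each adjoined $\eta=q\perp\<-b\>$ is anisotropic of dimension $2^n+1$, hence $i_1(\eta)=1$, so were $\phi_{\mathrm{an}}$ to become isotropic over $F'(\eta)$, Theorem~\ref{km}$(i)$ would give $\dim\phi_{\mathrm{an}}-i_1(\phi_{\mathrm{an}})\geqs \dim\eta-i_1(\eta)=2^n$, contradicting $\dim\phi_{\mathrm{an}}\leqs 2^n$ together with $i_1(\phi_{\mathrm{an}})\geqs 1$ (valid since $\phi_{\mathrm{an}}$ is anisotropic of positive even dimension). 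Thus $\phi_{\mathrm{an}}$ is frozen and $\phi$ is never hyperbolic over $K$, so $x\in H_K(q)=D_K(q)$ by Corollary~\ref{group2} while $x\notin G_K(q)$; hence $q_K$ is an anisotropic group form that is not round.

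The main obstacle is precisely this last preservation step: an uncontrolled group completion might well make $\phi$ hyperbolic (equivalently force $x$ into $G(q)$) and hence round. The two devices that prevent this are, first, pre-splitting $\phi$ in its own tower down to anisotropic dimension at most $2^n$, which is available only because $\phi$ is not similar to a Pfister form; and second, the Karpenko--Merkurjev inequality, which guarantees that no function field of a form of dimension $2^n+1$ can further isotropize an anisotropic form of dimension at most $2^n$. I would also need the routine bookkeeping that anisotropy and the group property pass to the directed union, and the standard reduction underlying the residue computation, namely $\<1,-ax\>\otimes\<1,-bx\>\simeq\<1,-ax\>\otimes\<1,ab\>$ up to squares, which shows that at most one Pfister slot need involve $x$.
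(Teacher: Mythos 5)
Your proposal is correct, and in part $(iii)$ it takes a genuinely different route from the paper at the decisive step. Both arguments share the same skeleton: pass to $F(\!(x)\!)$, use $\phi=q\otimes\< 1,-x\>$ to create the witness $x$, group-complete via a Merkurjev tower built from function fields of the $(2^n+1)$-dimensional forms $q\perp\< -b\>$, $b\in H(q)\setminus D(q)$ (Theorem~\ref{H95} preserves anisotropy of $q$ since $2^n\leqs 2^n<2^n+1$, and Corollary~\ref{group2} gives the group property in the limit), and conclude non-roundness from $x\in D_K(q)\setminus G_K(q)$. The difference lies in how one guarantees that $\phi$ never becomes hyperbolic. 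The paper starts the tower directly at $L_0=F(\!(x)\!)(\phi)$, extracts $x\in H_{L_0}(q)\setminus D_{L_0}(q)$ from Theorem~\ref{transgroup}, and rules out hyperbolicity \emph{a posteriori}: were $\phi$ to become hyperbolic at some stage, the Elman--Lam representation theorem (via \cite[Lemma 3.1]{H4}) writes the current anisotropic part as $p\perp -xp$ with $\dim p<\dim q$, and the refined Karpenko--Merkurjev bound \cite[Corollary 4.2]{KM} --- the version controlling the full Witt index over the function field, not just $i_1$ --- forces $\dim p\geqs\dim q$, a contradiction. You instead pre-split: you show by a residue-form computation (using the normalisation $\< 1,-ax\>\otimes\< 1,-bx\>\simeq\< 1,-ax\>\otimes\< 1,ab\>$) that $\phi$ is not similar to a Pfister form over $F(\!(x)\!)$, run the generic splitting tower of $\phi$ down to $0<\dim\phi_{\mathrm{an}}\leqs 2^n$ --- the jump to $0$ being excluded because a form of dimension in the interval $(2^n,2^{n+1}]$ that is hyperbolic over its own function field is similar to a Pfister form by \cite[Theorem X.$4.14$]{LAM}, hence of dimension $2^{n+1}$, i.e.\ is $\phi$ itself --- and then freeze this small kernel during the group completion using only Theorem~\ref{km}~$(i)$: since $i_1(q\perp\< -b\>)=1$, isotropy of $\phi_{\mathrm{an}}$ over any $F'(q\perp\< -b\>)$ would give $\dim\phi_{\mathrm{an}}-i_1(\phi_{\mathrm{an}})\geqs 2^n$, impossible when $\dim\phi_{\mathrm{an}}\leqs 2^n$ and $i_1(\phi_{\mathrm{an}})\geqs 1$. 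What your route buys: it needs neither \cite[Lemma 3.1]{H4} nor \cite[Corollary 4.2]{KM} (nor, strictly, Theorem~\ref{transgroup}, since $x\notin G_M(q)$ falls out of non-hyperbolicity over $M$), and it renders the paper's preliminary reduction to $1\in D_F(q)$ unnecessary, as the group completion supplies $1\in D_K(q)$ automatically; your part $(i)$ is likewise case-free, whereas the paper routes $(i)$ through $(iii)$ together with the Pfister case via Lemma~\ref{simone}. What the paper's route buys: no pre-splitting phase and no residue analysis, with the work delegated to the two cited results (and, per Remark~\ref{scully}, Scully's theorem could serve instead). One micro-nitpick: a Pfister form over $F(\!(x)\!)$ genuinely involving $x$ has the shape $\tau\otimes\< 1,bx\>$ with $b\in F^{\x}$, not $\tau\otimes\< 1,-x\>$ on the nose, but your residue comparison absorbs the extra unit, so nothing breaks.
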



\begin{proof} $(i)$ In light of Statement $(iii)$, it suffices to prove this statement in the case where $q$ is similar to a Pfister form over $F$. By Lemma~\ref{simone}, we have that $q$ is a Pfister form if and only if it represents one. Thus, we may let $K=F$ in the case where $1\in D_F(q)$. Otherwise, we may let $K=F(q\perp\< -1\>)$, as $q_K$ is anisotropic by Theorem~\ref{H95}.

$(ii)$ Let $K/F$ be such that $q_K$ is a group form. As $1\in D_K(q)$, we have that $q_K$ is a Pfister form by Lemma~\ref{simone}, whereby $q_K$ is round.

$(iii)$ If $1\notin D_F(q)$, we may consider $q$ as a form over $L=F(q\perp\< -1\>)$, whereby $1\in D_L(q)$. In this case, $q_L$ remains anisotropic by Theorem~\ref{H95}. As $q$ is not similar to a Pfister form over $F$, it follows that $q$ is not hyperbolic over $F(q)$ by \cite[Theorem X.$4.14$]{LAM}. Since $i(q_{L(q)})=i(q_{F(q)(q\perp\< -1\>)})$, we may invoke the Cassels-Pfister Subform Theorem \cite[Theorem X.$4.5$]{LAM} to establish that $q$ is not hyperbolic over $L(q)$, whereby it follows that $q$ is not similar to a Pfister form over $L$ by \cite[Theorem X.$4.14$]{LAM}. Hence, we may assume, without loss of generality, that $1\in D_F(q)$. 



Let $K=F$ if $q$ is a group form over $F$ that is not round. Otherwise, let $L_0=F(\!(x)\!)(q\otimes \< 1,-x\>)$. Since $q$ is not similar to a Pfister form over $F$, it follows that $q$ is not a group form over $L_0$ by Theorem~\ref{transgroup}. Hence, we have that $H_{L_0}(q)\setminus D_{L_0}(q)$ is a non-empty set by Corollary~\ref{group2} (in particular, the proof of Theorem~\ref{transgroup} establishes that $x\in H_{L_0}(q)\setminus D_{L_0}(q)$). Consider the set $$Q(L_0)=\{ q\perp\< -a\>\mid a\in H_{L_0}(q)\setminus D_{L_0}(q)\},$$ which is a non-empty set of anisotropic forms over $L_0$. For $i\geqs 0$, we inductively define $L_{i+1}$ to be the compositum of all function fields of forms in $Q(L_i)$. For all $L_i$ and $a\in H_{L_i}(q)\setminus D_{L_i}(q)$, we have that $q$ is anisotropic over $L_i(q\perp\< -a\>)$ by Theorem~\ref{H95}. Hence, letting $K=\bigcup_{i=0}^{\infty} L_i$, it follows that $q_K$ is anisotropic. Moreover, as $H_{K}(q)= D_{K}(q)$ by construction, it follows that $q_K$ is a group form by Corollary~\ref{group2}. 


It remains to show that $q_K$ is not round. By construction, we have that $q\perp -xq$ is isotropic over $L_0$, whereby $x\in H_K(q)$. Suppose, for the sake of contradiction, that $q\perp -xq$ is hyperbolic over $K$. Hence, for some $i\in\mathbb{N}\cup \{ 0\}$, there exists an extension $L_i' /L_i$ and $a\in (L_i')^{\x}$ such that $((q\perp -xq)_{L_i'})_{\text{an}}$ is hyperbolic over $L_i'(q\perp\< -a\>)$. As a consequence of Elman and Lam's representation theorem \cite[Theorem 1.4]{EL}, there exists a form $p$ over $L_i'$ such that $\dim p<\dim q$ and $((q\perp -xq)_{L_i'})_{\text{an}}\simeq p\perp -xp$ (see \cite[Lemma 3.1]{H4}). Hence, invoking \cite[Corollary 4.2]{KM}, it follows that $$\dim(p\perp -xp)-i\left((p\perp -xp)_{L_i'(q\perp\< -a\>)}\right)\geqs \dim (q\perp\< -a\>)-i_1(q\perp\< -a\>).$$ 
As $i_1(q\perp\< -a\>)=1$ by Theorem~\ref{H95} and \cite[Exercise I.$16$]{LAM}, it follows that $$\dim(p\perp -xp)-\frac {\dim(p\perp -xp)} 2\geqs \dim q,$$ in contradiction to the fact that $\dim p<\dim q$. Hence, having obtained our desired contradiction, we may conclude that $x\notin G_K(q)$, whereby $q_K$ is not round by Corollary~\ref{roundchar}.
\end{proof}

\begin{remark}\label{scully} Scully \cite[Main Theorem]{S2} recently established that, for $p$ and $q$ anisotropic forms over $F$ of dimension at least two with $2^i<\dim q\leqs 2^{i+1}$, if $p_{F(q)}$ is hyperbolic, then $\dim p=2^{i+1}k$ for some $k\in\mathbb{N}$. One may invoke this result to shorten the final component of the above proof.
\end{remark}


Per Example~\ref{groupiso}, in order to answer Question~\ref{groupnotround} in the case where $\dim q\neq 2^n$ for $n\in\mathbb{N}$, we require some additional assumptions regarding the form $q$ over $F$. Orderings are a useful tool in this regard, with their behaviour with respect to function-field extensions being governed by the following result due to Elman, Lam and Wadsworth \cite[Theorem 3.5]{ELW} and, independently, Knebusch \cite[Lemma 10]{GS}.
\begin{theorem}\label{ELW} Let $q$ be a form of dimension at least two over a real field $F$. An ordering $P$ of $F$ extends to $F(q)$ if and only if $q$ is indefinite at $P$.
\end{theorem}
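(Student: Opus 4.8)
The statement splits into two implications, and only one of them is substantial.

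The ``only if'' direction is immediate. Suppose $P$ extends to an ordering $P'$ of $F(q)$. The coordinate functions of the generic point furnish a nontrivial zero of $q$ over $F(q)$, so $q_{F(q)}$ is isotropic. But a form that is definite at an ordering (all diagonal entries of the same sign) is anisotropic over the corresponding ordered field, since a sum of elements all positive at $P'$ is positive and hence nonzero. Thus the isotropy of $q_{F(q)}$ forces $q$ to be indefinite at $P'$; as $P'\cap F=P$, the diagonal entries of $q$ have the same signs at $P'$ as at $P$, so $q$ is indefinite at $P$.

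For the ``if'' direction, suppose $q$ is indefinite at $P$, and let $R$ denote a real closure of $(F,P)$. Since $R$ is real closed, forms over $R$ are classified by their signature and an indefinite form contains a hyperbolic plane; hence $q_R$ is isotropic. Choosing a diagonalisation over $R$ in which $q_R\simeq\<1,-1\>\perp q'$, the vector $(1,1,0,\ldots,0)$ is a nonzero zero of $q$ at which the gradient $(2a_1x_1,\ldots,2a_nx_n)$ does not vanish, so it is a smooth $R$-rational point of the quadric $q=0$ (smoothness is coordinate-free, so this lives on the base change of the $F$-quadric defining $F(q)$). The plan is to specialise the generic point of this quadric to such a smooth real point, thereby producing an $F$-place $\lambda\colon F(q)\to R\cup\{\infty\}$ that restricts to the identity on $F$.

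Granting such a place, one recovers the extension of the ordering formally. Composing the valuation underlying $\lambda$ with the unique ordering of the real closed residue field $R$, and invoking the Baer--Krull correspondence (orderings compatible with a valuation always exist when the residue field is orderable), one obtains an ordering $P'$ of $F(q)$ with respect to which $\lambda$ is order-compatible. Because $\lambda$ is the identity on $F$, the subfield $F$ carries the trivial valuation and embeds order-preservingly into $R$, so $P'\cap F$ is exactly the ordering of $F$ induced by $R$, namely $P$; hence $P$ extends. The crux of the whole argument is the construction of this $F$-place from the smooth real point, i.e.\ an Artin--Lang type homomorphism theorem for the real closure: one must dominate the regular local ring at the smooth $R$-point by a valuation ring of $F(q)$ whose residue field embeds in $R$ over $F$. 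This is the only step that is not purely formal, and it is precisely where the real closedness of $R$ and the smoothness of the chosen point are indispensable; the passage from the place back to an ordering via Baer--Krull is then routine.
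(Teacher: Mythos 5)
The paper does not prove Theorem~\ref{ELW}: it is quoted from Elman--Lam--Wadsworth \cite[Theorem 3.5]{ELW} and Knebusch \cite[Lemma 10]{GS}, so there is no internal proof to compare against, and your proposal must be judged against the classical arguments in those sources. Judged so, your outline is correct and is essentially the standard real-algebra proof. The ``only if'' direction (isotropy of $q_{F(q)}$ versus anisotropy of forms definite at an ordering) is complete. The ``if'' direction --- real closure $R$ of $(F,P)$, isotropy of $q_R$, a smooth $R$-point on the quadric, an $F$-place $F(q)\to R\cup\{\infty\}$, and Baer--Krull to pull the residue ordering back to an ordering of $F(q)$ lying over $P$ --- is the route Knebusch takes, and your pushdown argument ($\lambda$ is the identity on $F$, so units of $F$ have the signs of their residues) is sound.

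Two caveats. First, the step you yourself flag as the crux --- producing the place from the smooth $R$-point --- is invoked rather than proved; it is a standard fact (a regular local ring is dominated by a valuation ring of its fraction field with the same residue field, by induction on dimension along a regular system of parameters, and the local ring of a hypersurface at a point with nonvanishing gradient is regular), but as written your proof is complete only modulo this citation. Second, the case $\dim q=2$ needs separate care, which your parenthetical on coordinate-freeness does not supply: a nondegenerate binary quadric is not geometrically integral, so after base change to $R$ the quadric $q=0$ breaks into two lines and ``the function field of the base change'' is not a field. One either restricts the place to the function field of the component through the chosen point (noting $F(q)\hookrightarrow R(x)$ via $\sqrt{-ab}\in R$), or handles the case directly: for $q\simeq\<a,b\>$ one has $F(q)=F(\sqrt{-ab}\,)(x)$ (and $F(q)=F$ when $q\simeq\<1,-1\>$, by the paper's convention), and indefiniteness at $P$ means $-ab\in P$, so $P$ extends to $F(\sqrt{-ab}\,)$ and then to the rational function field. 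Finally, note a shortcut that avoids places and Baer--Krull entirely when $\dim q\geqs 3$: since $q_R$ is isotropic, the projective quadric is rational over $R$ (project from a smooth $R$-point), so the function field $R(q)$ is purely transcendental over $R$, hence formally real; any ordering of $R(q)$ restricts along $F(q)\subseteq R(q)$ to an ordering of $F(q)$ whose restriction to $F$ is $P$, because the unique ordering of $R$ restricts to $P$ on $F$.
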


Invoking Theorem~\ref{ELW}, we can resolve Question~\ref{groupnotround} in the case where $q$ is a positive-definite form over a real field. 



\begin{prop}\label{gnrreal} Let $F$ be a real field. Let $q$ be a form over $F$ that is positive definite with respect to some ordering of $F$. If $q$ is not similar to a Pfister form over $F$, there exists an extension $K/F$ such that $q_K$ is an anisotropic group form that is not round.
\end{prop}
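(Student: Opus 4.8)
The plan is to reduce to the case already settled, to isolate the single new phenomenon that positive-definiteness buys us, and then to re-run the construction of Theorem~\ref{gnr}. First I would dispose of the two-power case: if $\dim q=2^n$, then Theorem~\ref{gnr}$(iii)$ applies verbatim, since its only hypotheses are that $q$ is anisotropic (positive-definite forms are anisotropic), of two-power dimension, and not similar to a Pfister form. So I may assume $\dim q\neq 2^n$ for every $n$, in which case $q$ can never be similar to a Pfister form over any extension, for dimension reasons. Fixing an ordering $P$ at which $q$ is positive definite, I would next arrange that $1\in D_F(q)$: as $-1<0$ and $q>0$ at $P$, the form $q\perp\<-1\>$ is indefinite at $P$, so by Theorem~\ref{ELW} the ordering $P$ extends to $L=F(q\perp\<-1\>)$; over $L$ the form $q$ remains positive definite, hence anisotropic, and now represents $1$. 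Replacing $F$ by $L$, I assume $1\in D_F(q)$.

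The crux, and what lets positive-definiteness evade the obstruction of Proposition~\ref{isogroup}, is the following. Let $E/F$ be any extension carrying an ordering (still denoted $P$) at which $q$ is positive definite, and let $a\in H_E(q)\setminus D_E(q)$. Since $D_E(q)$ consists of elements positive at $P$, so does $H_E(q)=D_E(q)D_E(q)$; thus $a>0$, the form $\psi:=q\perp\<-a\>$ is anisotropic, and $\sgn_P(\psi)=\dim q-1$. As $\psi$ is indefinite at $P$, Theorem~\ref{ELW} lets $P$ extend to $E(\psi)$, and the signature of a form defined over $E$ is preserved under this scalar extension. Since the hyperbolic part of $\psi_{E(\psi)}$ contributes nothing to the signature, I obtain
\[\dim q-1=\sgn_P(\psi)=\bigl|\sgn(\psi_{E(\psi)})\bigr|\leqs\dim\psi-2\,i_1(\psi)=(\dim q+1)-2\,i_1(\psi),\]
forcing $i_1(\psi)=1$. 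Hence $i_1(q\perp\<-a\>)=1$ for every such $a$ at every stage of the construction, which is exactly the input that Theorem~\ref{gnr} extracted from the equality $\dim q=2^n$.

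With this in hand I would reproduce the construction in the proof of Theorem~\ref{gnr}$(iii)$. Set $L_0=F(\!(x)\!)(q\otimes\<1,-x\>)$; extending $P$ to $F(\!(x)\!)$ with $x$ positive, the form $q\otimes\<1,-x\>$ is indefinite (its signature is $0$), so $P$ extends to $L_0$ and $q$ stays positive definite. As $q$ is not similar to a Pfister form, Theorem~\ref{transgroup} shows $q$ is not a group form over $L_0$ and that $x\in H_{L_0}(q)\setminus D_{L_0}(q)$. I then iterate, letting $L_{i+1}$ be the compositum of the function fields of the forms $q\perp\<-a\>$ with $a\in H_{L_i}(q)\setminus D_{L_i}(q)$, and put $K=\bigcup_i L_i$. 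Each such $q\perp\<-a\>$ is indefinite at $P$, so by Theorem~\ref{ELW} the ordering propagates through the tower and $q_K$ stays positive definite, hence anisotropic; by construction $H_K(q)=D_K(q)$, so $q_K$ is a group form by Corollary~\ref{group2}. For non-roundness I argue as in Theorem~\ref{gnr}: were $q\perp -xq$ hyperbolic over $K$, its anisotropic part would become hyperbolic after a single function-field step $E(q\perp\<-a\>)$, with $E$ an intermediate field carrying an extension of $P$ and $a\in H(q)\setminus D(q)$; combining the Elman--Lam representation theorem \cite[Theorem 1.4]{EL} (cf.\ \cite[Lemma 3.1]{H4}) with \cite[Corollary 4.2]{KM} and the identity $i_1(q\perp\<-a\>)=1$ just established then yields $\dim p\geqs\dim q$ for a form $p$ with $\dim p<\dim q$, a contradiction. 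Thus $x\in H_K(q)\setminus G_K(q)$ and $q_K$ is not round by Corollary~\ref{roundchar}.

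The step I expect to be most delicate is propagating a single ordering $P$, at which $q$ is positive definite, through the (possibly infinite) compositum defining $K$: one must verify that at each stage the forms $q\perp\<-a\>$ remain indefinite so that Theorem~\ref{ELW} applies, and that the intermediate base field $E$ over which I invoke both the signature bound for $i_1$ and \cite[Corollary 4.2]{KM} genuinely carries an extension of $P$ with $a$ positive and $q\perp\<-a\>$ anisotropic. Once anisotropy and the identity $i_1(q\perp\<-a\>)=1$ are secured, the remainder is a faithful transcription of the proof of Theorem~\ref{gnr}.
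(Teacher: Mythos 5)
Your proposal is correct and follows the paper's proof in all essentials: you force $i_1(q\perp\langle -a\rangle)=1$ by propagating an ordering at which $q$ is positive definite through each function-field step via Theorem~\ref{ELW} and comparing signatures, then rerun the tower construction and the Elman--Lam/Karpenko--Merkurjev contradiction exactly as in Theorem~\ref{gnr}~$(iii)$. Your minor deviations --- splitting off the $\dim q=2^n$ case first (which lets you bypass the paper's Cassels--Pfister step when passing to $F(q\perp\langle -1\rangle)$, since non-similarity to a Pfister form is then automatic by dimension) and deducing anisotropy of $q$ over $L(q\perp\langle -a\rangle)$ from the extended ordering rather than from Theorem~\ref{km}~$(i)$ --- are harmless simplifications of the same route, not a different argument.
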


\begin{proof} Let $P$ be an ordering of $F$ such that $q$ is positive definite with respect to $P$. If $1\notin D_F(q)$, we may consider $q$ as a form over $L=F(q\perp\< -1\>)$, whereby $P$ is an ordering of $L$ by Theorem~\ref{ELW} and $1\in D_L(q)$. Per the proof of Proposition~\ref{gnr}~$(iii)$, $q$ is not hyperbolic over $L(q)$, and thus is not similar to a Pfister form over $L$. Hence, we may assume, without loss of generality, that $1\in D_F(q)$. 

Let $K=F$ if $q$ is a group form over $F$ that is not round. Otherwise, let $L_0=F(\!(x)\!)(q\otimes \< 1,-x\>)$. Since $q$ is not similar to a Pfister form over $F$, it follows that $q$ is not a group form over $L_0$ by Theorem~\ref{transgroup}. Hence, we have that $H_{L_0}(q)\setminus D_{L_0}(q)$ is a non-empty set by Corollary~\ref{group2}. Moreover, by Theorem~\ref{ELW}, there exist orderings of $L_0$ such that $q$ is positive definite. 

Let $L/L_0$ be an extension such that $q$ is positive definite with respect to an ordering $P$ of $L$ and $H_{L}(q)\setminus D_{L}(q)$ is not empty. Let $a\in H_{L}(q)\setminus D_{L}(q)$, whereby $a\in P$ and the form $q\perp\< -a\>$ has signature $\dim q -1$ with respect to $P$. As $P$ extends to $L(q\perp\< -a\>)$, by Theorem~\ref{ELW}, it thus follows that $i_1(q\perp\< -a\>)=1$. Hence, $q$ is anisotropic over $L(q\perp\< -a\>)$ by Theorem~\ref{km}~$(i)$.


Equipped with the above, we may now proceed with the argument in the proof of Proposition~\ref{gnr}~$(iii)$ to establish the existence of an extension $K/L_0$ such that $q_K$ is an anisotropic group form with $x\in D_{K}(q)\setminus G_{K}(q)$.
\end{proof}

As discussed in \cite{OS2}, many properties of a form $q$ over $F$ are shared by its generic Pfister multiple $q\otimes\< 1,x\>$ over $F(\!(x)\!)$. Invoking Proposition~\ref{group1}, we may show that this is also the case with respect to the group property.


\begin{prop}\label{grouptrans} A form $q$ is a group form over $F$ if and only if $q\otimes\< 1,x\>$ is a group form over $K=F(\!(x)\!)$.
\end{prop}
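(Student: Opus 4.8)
The plan is to reduce the biconditional to the value-set criterion of Proposition~\ref{group1}, namely that a form is a group form over a field precisely when $H\subseteq D$, and then to compute both $D_K(q\otimes\<1,x\>)$ and $H_K(q\otimes\<1,x\>)$ explicitly in terms of $D_F(q)$ and $H_F(q)$ by means of Lemma~\ref{Hlemma}. First I would dispose of the isotropic case: if $q$ is isotropic over $F$, then $q$ is a group form over $F$, while $q\otimes\<1,x\>\simeq q\perp xq$ is isotropic over $K$ (its subform $q$ already is), hence an isotropic group form over $K$; so the equivalence holds trivially. I may therefore assume $q$ is anisotropic over $F$, in which case $q\otimes\<1,x\>$ is anisotropic over $K$ by Lemma~\ref{Hlemma}, and both sides of the desired equivalence concern anisotropic forms.

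The core computation determines the square classes represented by $q\otimes\<1,x\>$ over $K$. Recalling that every square class of $K=F(\!(x)\!)$ is represented by $a$ or $ax$ for some $a\in F^{\x}$, I would test each in turn. For $a\in F^{\x}$, the form $q\otimes\<1,x\>\perp\<-a\>\simeq(q\perp\<-a\>)\perp xq$ has Witt index $i(q\perp\<-a\>)+i(q)=i(q\perp\<-a\>)$ by Lemma~\ref{Hlemma}, so $a\in D_K(q\otimes\<1,x\>)$ if and only if $a\in D_F(q)$; similarly $q\otimes\<1,x\>\perp\<-ax\>\simeq q\perp x(q\perp\<-a\>)$ has Witt index $i(q\perp\<-a\>)$, so $ax\in D_K(q\otimes\<1,x\>)$ if and only if $a\in D_F(q)$. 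Thus, at the level of square classes, $D_K(q\otimes\<1,x\>)=D_F(q)\cup xD_F(q)$. Multiplying out, and using that $x^2\in(K^{\x})^2$, then yields $H_K(q\otimes\<1,x\>)=H_F(q)\cup xH_F(q)$.

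With these two identities in hand the conclusion follows: by Proposition~\ref{group1}, $q\otimes\<1,x\>$ is a group form over $K$ if and only if $H_F(q)\cup xH_F(q)\subseteq D_F(q)\cup xD_F(q)$. The crucial point, and the step requiring the most care, is that the square classes lying in $F^{\x}$ and those of the form $x\cdot(\text{square class of }F^{\x})$ are disjoint in $K^{\x}/(K^{\x})^2$, since $K^{\x}/(K^{\x})^2\cong F^{\x}/(F^{\x})^2\times\langle x\rangle$ via the parity of the $x$-adic valuation. This disjointness lets me split the inclusion into its $F^{\x}$-part and its $x$-part, each of which reduces to $H_F(q)\subseteq D_F(q)$; invoking Proposition~\ref{group1} once more, now over $F$, completes the equivalence. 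The main obstacle is thus not any deep input but the bookkeeping in the square-class computation: ensuring that the representability tests via Lemma~\ref{Hlemma} are applied to the correctly scaled orthogonal complements, and that no collapsing of square classes occurs when passing from $F$ to $K$.
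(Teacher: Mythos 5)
Your proof is correct and follows essentially the same route as the paper's: both arguments rest on Lemma~\ref{Hlemma}, the decomposition of the square classes of $F(\!(x)\!)$ into those of the form $a$ and $ax$ with $a\in F^{\x}$, and the criterion of Proposition~\ref{group1}. The only difference is organisational: you compute $D_K(q\otimes\< 1,x\>)$ and $H_K(q\otimes\< 1,x\>)$ explicitly and then compare them, whereas the paper verifies the isotropy criterion directly (handling the $ax$ classes via $x\in D_K(\< 1,x\>)$ rather than via $x^2\in (K^{\x})^2$), but the underlying computations coincide.
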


\begin{proof} Without loss of generality, we may assume that $q$ is anisotropic over $F$.



Let $q$ be a group form over $F$. As every non-zero square class in $K$ can be represented by $a$ or $ax$ for some $a\in F^{\x}$, we first suppose that $q\otimes\< 1,x\>\perp -ax(q\otimes\< 1,x\>)$ is isotropic over $K$ for $a\in F^{\x}$. As $x\in D_K(\< 1,x\>)$, it follows that $q\otimes\< 1,x\>\perp -a(q\otimes\< 1,x\>)$ is isotropic over $K$ in this case. Thus, supposing that $q\otimes\< 1,x\>\perp -a(q\otimes\< 1,x\>)$ is isotropic over $K$ for $a\in F^{\x}$, it suffices to show that $q\otimes\< 1,x\>\perp\<-a\>$ and $q\otimes\< 1,x\>\perp\<-ax\>$ are isotropic over $K$ in accordance with Proposition~\ref{group1}. Invoking Lemma~\ref{Hlemma}, it follows that $q\perp -aq$ is isotropic over $F$, whereby $q\perp\<-a\>$ is isotropic over $F$ by Proposition~\ref{group1}. Hence, it follows that $q\otimes\< 1,x\>\perp\<-a\>$ and $q\otimes\< 1,x\>\perp\<-ax\>$ are isotropic over $K$, as desired.

Conversely, let $q\otimes\< 1,x\>$ be a group form over $K$. Letting $a\in F^{\x}$ be such that $q\perp -aq$ is isotropic over $F$, it follows that $q\otimes\< 1,x\>\perp -a(q\otimes\< 1,x\>)$ is isotropic over $K$, whereby $q\otimes\< 1,x\>\perp -a$ is isotropic over $K$ by Proposition~\ref{group1}. Hence, $q\perp \<-a\>$ is isotropic over $F$ by Lemma~\ref{Hlemma}, whereby $q$ is a group form over $F$ by Proposition~\ref{group1}.
\end{proof}

Combining Proposition~\ref{grouptrans} with Proposition~\ref{laurent} $(ii)$ and \cite[Proposition $3.11$]{OS2}, we obtain the following corollary.

\begin{cor}\label{genericgnr} Let $q$ be an anisotropic group form over $F$ that is not round. Then $q\otimes\< 1,x\>$ is an anisotropic group form over $K=F(\!(x)\!)$ that is not round.
\end{cor}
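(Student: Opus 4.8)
The plan is to verify the three assertions in turn---that $q\otimes\< 1,x\>$ is anisotropic over $K$, that it is a group form over $K$, and that it fails to be round over $K$---with only the last requiring real work. For anisotropy, since $q$ is anisotropic over $F$ I would write $q\otimes\< 1,x\>\simeq q\perp xq$ and apply Lemma~\ref{Hlemma} to obtain $i(q\perp xq)=i(q)+i(q)=0$. The group property is immediate from Proposition~\ref{grouptrans}, which asserts precisely that $q$ is a group form over $F$ if and only if $q\otimes\< 1,x\>$ is a group form over $K$. Together these already give that $q\otimes\< 1,x\>$ is an anisotropic group form over $K$, so it remains only to show that it is not round.

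For the failure of roundness, the key is to compute $D_K(q\otimes\< 1,x\>)$ and $G_K(q\otimes\< 1,x\>)$ square class by square class, recalling that every non-zero square class in $K=F(\!(x)\!)$ is represented by $a$ or $ax$ for some $a\in F^{\x}$. To determine the value set, I would apply Lemma~\ref{Hlemma} to $(q\perp xq)\perp\< -a\>\simeq (q\perp\< -a\>)\perp xq$ and to $(q\perp xq)\perp\< -ax\>\simeq q\perp x(q\perp\< -a\>)$: in each case the resulting form is isotropic exactly when $q\perp\< -a\>$ is isotropic over $F$, so that both $a$ and $ax$ lie in $D_K(q\otimes\< 1,x\>)$ if and only if $a\in D_F(q)$. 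To determine the group of similarity factors, I would note that $\< 1,-a\>\otimes(q\otimes\< 1,x\>)\simeq(\< 1,-a\>\otimes q)\perp x(\< 1,-a\>\otimes q)$ and that, since $x^2$ is a square, $\< 1,-ax\>\otimes(q\otimes\< 1,x\>)$ reduces to the very same shape; Lemma~\ref{Hlemma} then gives Witt index $2\,i(\< 1,-a\>\otimes q)$ in both cases, so that $a$ and $ax$ lie in $G_K(q\otimes\< 1,x\>)$ if and only if $a\in G_F(q)$.

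With these two computations in hand the conclusion is immediate. Since $q$ is a group form over $F$ it represents $1$ and satisfies $G_F(q)\subseteq D_F(q)$; as $q$ is not round, $D_F(q)\neq G_F(q)$ by Corollary~\ref{genroundchar}, so I may choose $d\in D_F(q)\setminus G_F(q)$. By the computations above, the square class of $d$ lies in $D_K(q\otimes\< 1,x\>)$ but not in $G_K(q\otimes\< 1,x\>)$, whence $D_K(q\otimes\< 1,x\>)\not\subseteq G_K(q\otimes\< 1,x\>)$ and $q\otimes\< 1,x\>$ is not round over $K$ by Corollary~\ref{genroundchar}.

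I expect the only delicate point to be the bookkeeping with square classes: one must confirm that the ``constant'' witness $d\in F^{\x}$ does not inadvertently enter $G_K(q\otimes\< 1,x\>)$ through the $x$-component, which holds because $d$ and $dx$ lie in distinct square classes of $K$ and the respective $1$- and $x$-components of $D_K$ and $G_K$ do not interact. Alternatively, the entire roundness step can be packaged more economically using Proposition~\ref{laurent}~$(ii)$, which transfers roundness of the anisotropic form $q$ between $F$ and $F(\!(x)\!)$, together with \cite[Proposition~$3.11$]{OS2}, which relates the roundness of the generic Pfister multiple $q\otimes\< 1,x\>$ to that of $q$; this is the route suggested by the surrounding discussion and bypasses the explicit recomputation of the two sets.
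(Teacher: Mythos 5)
Your proof is correct, and its main line is genuinely different from the paper's. The paper gives no computation at all: it obtains the corollary in one stroke by combining Proposition~\ref{grouptrans} (for the group property), Proposition~\ref{laurent}~$(ii)$ (transfer of roundness of the anisotropic form $q$ between $F$ and $F(\!(x)\!)$), and the external result \cite[Proposition 3.11]{OS2} on generic Pfister multiples --- which is exactly the ``packaged'' alternative you sketch in your final paragraph. Your primary argument instead replaces the appeal to \cite{OS2} by a self-contained square-class computation: applying Lemma~\ref{Hlemma} to the decompositions $(q\perp\langle -a\rangle)\perp xq$, $q\perp x(q\perp\langle -a\rangle)$ and $(\langle 1,-a\rangle\otimes q)\perp x(\langle 1,-a\rangle\otimes q)$ (the $\langle 1,-ax\rangle$ case reducing to the latter since $x^2$ is a square), you determine that $D_K(q\otimes\langle 1,x\rangle)$ consists precisely of the classes of $a$ and $ax$ with $a\in D_F(q)$, and $G_K(q\otimes\langle 1,x\rangle)$ of those with $a\in G_F(q)$; a witness $d\in D_F(q)\setminus G_F(q)$, which exists because the group form $q$ is not round, therefore survives to $K$, and the valuation-parity point you flag is the correct justification that the classes of $d$ and $dx$ cannot interact. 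Both computations check out (note that anisotropy of $q$ is used in the $D_K$ step, as you have it). What your route buys is independence from \cite{OS2} and strictly more information --- explicit descriptions of the value set and the similarity factors of the generic Pfister multiple over $K$, in effect re-proving the relevant case of \cite[Proposition 3.11]{OS2}; what the paper's route buys is brevity and reuse of results already on record. One small economy: once you have $D_K\not\subseteq G_K$, non-roundness follows directly from the definition, so the detour through $H_K$ and Corollary~\ref{genroundchar} is harmless but not needed.
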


{\emph{Acknowledgements.} I thank Sylvain Roussey for making his comprehensive PhD thesis available to me. I am grateful to Vincent Astier and Thomas Unger for helpful comments.

\end{document}